\documentclass[11pt]{amsart}
\usepackage{amsfonts,amsthm,amsmath}
\usepackage{natbib}
\usepackage{hyperref}

\newtheorem{theorem}{Theorem}[section]
\newtheorem{lemma}{Lemma}[section]

\newcommand{\nchoosek}{\left(\begin{array}cn\\k\end{array}\right)}
\newcommand{\nchoosel}{\left(\begin{array}cn\\l\end{array}\right)}

\newcommand{\one}{{\bf 1}}
\newcommand{\reals}{{\mathbb R}}
\newcommand{\bbr}{\reals}
\newcommand{\vep}{\varepsilon}

\def\Var{{\rm Var}}

\numberwithin{equation}{section}

\begin{document}

\title[Truncated heavy tails]{Large deviations for truncated heavy-tailed random variables: a boundary case}
\author[A. Chakrabarty]{Arijit Chakrabarty}
\address{Theoretical Statistics and Mathematics Unit,
Indian Statistical Institute,
203 B.T. Road,
Kolkata - 700108, India}
\email{arijit.isi@gmail.com}

\begin{abstract}
This paper investigates the decay rate of the probability that the row sum of a triangular array of truncated heavy tailed random variables is larger than an integer $(k)$ times the truncating threshold, as both - the number of summands and the threshold go to infinity. The method of attack for this problem is significantly different from the one where $k$ is not an integer, and requires much sharper estimates.
\end{abstract}

\subjclass{60F10} \keywords{ heavy tails, truncation, regular variation,  large deviation\vspace{.5ex}}

\maketitle

\section{Introduction}
There are systems where the occurrence of a particularly large value is more likely than usual. Often, such systems are modeled by a distribution whose tail is regularly varying near infinity. At the same time, it may be the case that there is a natural upper bound on the set of possible values. This makes imperative the analysis of so called truncated regularly varying distributions. 

Suppose $Y, Y_1, Y_2,\ldots$ are i.i.d.\ random variables with $P(Y>\cdot)$ regularly varying with index $-\alpha$, for some $\alpha>0$, and let $(M_n)$ be a sequence going to infinity.
Define the triangular array $\{X_{nj}:1\le j\le n\}$ by
\begin{equation}\label{eq.model}
X_{nj}:=Y_j\one(|Y_j|\le M_n)\,,
\end{equation}
and denote the row sum by
\begin{equation}\label{eq.rowsum}
S_n:=\sum_{j=1}^nX_{nj}\,.
\end{equation}

The question of how fast should $M_n$ grow so that the asymptotic behaviour of $S_n$ resembles that of the partial sum of i.i.d.\ random variables from an untruncated regularly varying distribution has been looked at from various angles; see \cite{chakrabarty:samorodnitsky:2012} and \cite{chakrabarty:2012}. The latter answers the above question from the point of view of the large deviation probability 
\begin{equation}\label{intro.eq1}
P\left(S_n>kM_n\right)
\end{equation}
as $n\to\infty$, for a fixed $k>0$. It is established, almost beyond doubt, that when $M_n$ grows fast enough so that
\begin{equation}
\label{new.eq1}\lim_{n\to\infty}nP(Y>M_n)=0\,,
\end{equation}
the underlying truncated distribution resembles the corresponding untruncated one. Furthermore, when \eqref{new.eq1} holds, it turns out that the case when $k$ is an integer is more challenging than the case when $k$ is a fraction because of the following simple reason. When $k$ is not an integer, a result similar to Theorem 2.2 in \cite{chakrabarty:2012} holds, which essentially shows that the probability in \eqref{intro.eq1} is asymptotically equivalent to
$$
\left(\begin{array}cn\\\lceil k\rceil\end{array}\right)P\left(\sum_{j=1}^{\lceil k\rceil}X_{nj}>kM_n\right)\,,
$$
and therefore argues that the probability is of the order $\{nP(Y>M_n)\}^{\lceil k\rceil}$. When $k$ is an integer, the natural guess is that the same equivalence holds, with $\lceil k\rceil$ replaced by $k+1$, that is, $S_n$ will be larger than $kM_n$, ``if and only if'', $\sum_{i=1}^{k+1}X_{nj_i}$ is larger than $kM_n$ for some $1\le j_1<\ldots<j_{k+1}\le n$. However, as is shown in Theorem \ref{t1} below, this is not always the case. This disagreement with intuition is the primary reason behind the boundary case (that is the case when $k$ is an integer) being harder. The author could solve this boundary problem only when $\alpha\ge k$ or $\alpha<k/(k+2)$. Surprisingly, it turns out that the asymptotic magnitude of the probability is different for these two regimes of $\alpha$, a phenomenon one wouldn't guess a priori. The behavior in these two regimes, under the assumption \eqref{new.eq1}, are studied in Sections \ref{sec:alphagek} and \ref{sec:alphalk} respectively. In Section \ref{sec:examples}, we consider a couple of examples.

We conclude this section by pointing out some classical works in the theory of large deviations for (untruncated) heavy tailed random variables. The study started in late sixties with \cite{heyde:1968}, \cite{nagaev:1969b} and \cite{nagaev:1969a}, significant contributions being made later by \cite{nagaev:1979} and \cite{cline:hsing:1991}, among others. Recently, \cite{hult:lindskog:mikosch:samorodnitsky:2005} studied the functional version of the large deviations principle.

\section{The case $\alpha\ge k$}\label{sec:alphagek} Suppose that $Y,Y_1, Y_2,\ldots$ are i.i.d. random variables such that $P(|Y|>\cdot)$ is regularly varying with index $-\alpha$, for some $\alpha>1$, and
\begin{equation}\label{defp}
p:=\lim_{x\to\infty}\frac{P(Y>x)}{P(|Y|>x)}\mbox{ exists, and is positive}\,.
\end{equation}
We assume that
\begin{equation}\label{eq.zeromean}
E(Y)=0\,.
\end{equation}
If $\alpha=2$, we assume that
\begin{equation}\label{eq.fv}
E(Y^2)<\infty\,.
\end{equation}
Furthermore, we assume that given $\delta>0$, there exist $T_0>0$ and $u_0\in(0,1)$ such that for all $T\ge T_0$ and $1-u_0\le a\le b\le1$,
\begin{equation}\label{assume}
\left|\frac{P(Y>aT)-P(Y>bT)}{P(Y>T)}-(a^{-\alpha}-b^{-\alpha})\right|\le\delta(b-a)\,.
\end{equation}
In Section \ref{sec:examples}, it will be shown that if the function $x^\alpha P(Y>x)$ belongs to the Zygmund class, then the above is satisfied.

Define the quantile sequence $(b_n)$ as
\begin{equation}\label{eq.defbn}
b_n:=\inf\left\{x>0:P(Y>x)\le\frac1n\right\}\,.
\end{equation}
Let $(M_n)$ be a sequence of positive numbers so that
\begin{equation}\label{eq.hard:alphal2}
M_n\gg b_n,\mbox{ if }\alpha<2\,,
\end{equation}
and
\begin{equation}\label{eq.hard:alphage2}
M_n\gg n^{1/2+\gamma}\mbox{ for some }\gamma>0,\mbox{ if }\alpha\ge2\,,
\end{equation}
where $u_n\gg v_n$ (or $v_n\ll u_n$), for positive sequences $u_n$ and $v_n$, means that
\[
\lim_{n\to\infty}\frac{u_n}{v_n}=\infty\,,
\]
throughout the paper.
The triangular array $\{X_{nj}:1\le j\le n\}$ and their row sum $S_n$ are as defined in \eqref{eq.model} and \eqref{eq.rowsum} respectively.

The first result of this paper, Theorem \ref{t1} below, describes the decay rate of $P(S_n>kM_n)$ when $k$ is a positive integer and $\alpha\ge k$, under some additional assumptions.
The result below refers to stable distributions; an overview of this topic can be found in the first chapter of \cite{samorodnitsky:taqqu:1994}.

\begin{theorem}\label{t1} Fix an integer $k$ such that $1\le k\le\alpha$ (recall that $\alpha>1$).
Assume furthermore that if $\alpha=k\ge3$,  then
\begin{equation}\label{l3.eq2}
\int_0^\infty y^\alpha P(Y\in dy)<\infty\,.
\end{equation}
Then
$$
P(S_n>kM_n)\sim c_n^kn^kM_n^{-k}P(Y>M_n)^k\frac{\alpha^k}{(k!)^2}\int_0^\infty s^kP(Z_\alpha\in ds)\,,
$$
as $n\to\infty$, where
$$
c_n:=\left\{\begin{array}{ll}b_n,&\alpha<2\,,\\n^{1/2},&\alpha\ge2\,,\end{array}\right.
$$
and $Z_\alpha$ follows an $\alpha$-stable distribution with scale, location and skewness parameters as $1$, $0$ and $2p-1$, respectively, with $p$ as in \eqref{defp}, if $\alpha<2$, and a normal distribution with mean zero and variance same as that of $Y$ if $\alpha\ge2$.
\end{theorem}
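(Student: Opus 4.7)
The plan is to decompose $\{S_n>kM_n\}$ by the number of summands close to the truncation level. Fix a small $\delta\in(0,1/(k+1))$ and set $N_n:=\#\{j:Y_j\in((1-\delta)M_n,M_n]\}$ (the \emph{large} indices). The heuristic behind Theorem \ref{t1} is that $\{S_n>kM_n\}$ typically occurs when $N_n=k$, each large summand lies within $O(c_n)$ of $M_n$, and the remaining $n-k$ truncated summands contribute an $O(c_n)$ fluctuation that just closes the deficit. I therefore aim to establish the claimed asymptotic on the event $\{N_n=k\}$ and show that $\{N_n<k\}$ and $\{N_n>k\}$ give strictly smaller contributions.

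For $\{S_n>kM_n,\,N_n\le k-1\}$, the large contribution is at most $(k-1)M_n$, so the sum $R_n$ of the non-large summands must exceed $M_n$. Since each such summand is bounded by $(1-\delta)M_n$ and has mean zero (by \eqref{eq.zeromean}) with a controlled truncated variance, Bernstein/Fuk--Nagaev type inequalities together with \eqref{eq.hard:alphal2}--\eqref{eq.hard:alphage2} give $P(R_n>M_n)=o(n^kc_n^kM_n^{-k}P(Y>M_n)^k)$. For $\{N_n\ge k+1\}$, a union bound yields $P(N_n\ge k+1)=O((nP(Y>M_n))^{k+1})$; regular variation and the definition of $c_n$ show the ratio against the target order is $O((M_n/b_n)^{k-\alpha})$ when $\alpha<2$ and $O(n^{1-k/2}L(M_n)M_n^{k-\alpha})$ when $\alpha\ge2$, where $L$ is the slowly varying part of $P(Y>\cdot)$. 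For $\alpha>k$ this is immediately $o(1)$; at the boundary $\alpha=k$ one needs $L(M_n)\to0$, and this is exactly what \eqref{eq.fv} (for $k=2$) or \eqref{l3.eq2} (for $k\ge3$) forces.

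For the main contribution, on $\{N_n=k\}$ I would condition on the identities $i_1<\cdots<i_k$ of the large indices and set $U_j:=(M_n-Y_{i_j})/c_n\in[0,\delta M_n/c_n]$. Assumption \eqref{assume} implies the joint law of $(U_1,\ldots,U_k)$ on any compact $[0,T]^k$ is asymptotically $(c_n\alpha P(Y>M_n)/M_n)^k$ times Lebesgue measure. The sum $R_n$ of the $n-k$ non-large summands converges after rescaling to $c_n Z_\alpha$: the stable CLT under \eqref{defp} and \eqref{eq.hard:alphal2} when $\alpha<2$, and the Gaussian CLT under \eqref{eq.fv} and \eqref{eq.hard:alphage2} when $\alpha\ge2$; restricting to non-large summands is asymptotically innocuous because $nP(|Y|>(1-\delta)M_n)\to0$. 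Rewriting $\{S_n>kM_n,N_n=k\}$ as $\{R_n/c_n>U_1+\cdots+U_k\}$ and integrating yields
$$
P(S_n>kM_n,N_n=k)\;\sim\;\binom{n}{k}\left(\frac{c_n\alpha P(Y>M_n)}{M_n}\right)^{\!k}\int_{[0,\infty)^k}\!P\bigl(Z_\alpha>u_1+\cdots+u_k\bigr)\,du_1\cdots du_k.
$$
A change of variables $s=u_1+\cdots+u_k$ reduces the last integral to $\int_0^\infty P(Z_\alpha>s)\,s^{k-1}/(k-1)!\,ds$, which equals $\int_0^\infty s^kP(Z_\alpha\in ds)/k!$ by Fubini, and $\binom{n}{k}\sim n^k/k!$ completes the evaluation.

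The main obstacles are twofold. First, the convergence $R_n/c_n\Rightarrow Z_\alpha$ must be upgraded to uniform control on the tail $P(R_n>c_nt)$ over windows $t\in[0,T]$ so that it can be integrated against the (asymptotically Lebesgue) law of $(U_1,\ldots,U_k)$ while letting $T\to\infty$; here \eqref{assume} plays the essential role of making the local mass of $Y$ near $M_n$ sharp enough for this integration to recover the exact constant. Second, the boundary case $\alpha=k$ is delicate because the $\{N_n\ge k+1\}$ contribution is a priori of the same order as the main term; the purpose of the additional moment hypotheses \eqref{eq.fv} and \eqref{l3.eq2} is precisely to force $L(M_n)\to0$, after which letting the windowing $\delta\to0$ at the end makes that contribution genuinely negligible.
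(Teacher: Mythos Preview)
Your decomposition by $N_n$ and the treatment of the main term $\{N_n=k\}$ and of $\{N_n\ge k+1\}$ are essentially correct and parallel the paper's Lemmas~\ref{l2}--\ref{l3}: the local density of the $U_j$ near the truncation level comes from \eqref{assume}, and the passage from weak convergence $R_n/c_n\Rightarrow Z_\alpha$ to the integrated statement is exactly the $k$-th moment convergence established there via the de~Acosta--Gin\'e theorem. The constant $\alpha^k/(k!)^2\int_0^\infty s^kP(Z_\alpha\in ds)$ comes out right.

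The genuine gap is the $\{N_n\le k-1\}$ step, and for $k\ge2$ it is fatal as written. You bound this event by $\{R_n>M_n\}$ and invoke Bernstein/Fuk--Nagaev. But each summand in $R_n$ is only bounded by $(1-\delta)M_n$, so in Prokhorov/Bernstein the ratio $\lambda/C=1/(1-\delta)$ is of order~$1$ and the resulting bound is merely $e^{-O(1)}$, not even $o(1)$. The true order is $P(R_n>M_n)\asymp\bigl(nP(Y>M_n)\bigr)^{2}$ (this is the non-integer case $M_n/((1-\delta)M_n)\in(1,2)$ quoted in the introduction), and for $k\ge2$ one has
\[
\frac{\bigl(nP(Y>M_n)\bigr)^{2}}{\bigl(nP(Y>M_n)\bigr)^{k}(c_n/M_n)^{k}}
=\bigl(nP(Y>M_n)\bigr)^{2-k}(M_n/c_n)^{k}\longrightarrow\infty,
\]
so $P(R_n>M_n)$ is \emph{not} $o$ of the target. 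The repair is to stratify by the exact value $N_n=l$, $0\le l\le k-1$: then the large summands contribute at most $lM_n$, so $R_n>(k-l)M_n$, which (non-integer case again, ratio $(k-l)/(1-\delta)\in(k-l,k-l+1)$) has probability $O_\delta\bigl((nP(Y>M_n))^{k-l+1}\bigr)$; combined with $P(N_n=l)=O_\delta\bigl((nP(Y>M_n))^{l}\bigr)$ each stratum is $O_\delta\bigl((nP(Y>M_n))^{k+1}\bigr)$, which \emph{is} negligible. The paper achieves the same end by a different mechanism: its Lemma~\ref{l1} introduces an intermediate truncation level $z_n$ with $b_n\ll z_n\ll x_n$, applies Prokhorov only to summands bounded by $z_n$ (where $\lambda/C\to\infty$ and one does get superpolynomial decay), and handles the summands in $(z_n,x_n]$ by an inductive counting argument in~$k$.
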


For proving the result, we need some lemmas. First, let us fix some notations. For $l\ge1$, let $C_{nl}$ be the set of $l$-tuples $j=(j_1,\ldots,j_l)$ such that $1\le j_1<\ldots<j_l\le n$. For any $j\in C_{nl}$, denote
$$
j^c:=\{1,\ldots,n\}\setminus\{j_1,\ldots,j_l\}\,.
$$

\begin{lemma}\label{l1}
Suppose that $(x_n)$ is a sequence satisfying
\begin{equation}\label{l1.eq1}
M_n\gg x_n\gg b_n,\mbox{ if }\alpha<2\,,
\end{equation}
and
\begin{equation}\label{l1.eq2}
M_n\gg x_n\gg n^{1/2+\gamma},\mbox{ if }\alpha\ge2\,,
\end{equation}
where $\gamma$ is same as that in \eqref{eq.hard:alphage2}. Then,
$$
P\left(\bigcap_{j\in C_{nk}}\left\{\sum_{i\in j^c}X_{ni}>x_n,S_n>kM_n\right\}\right)=O\left(n^{k+1}P(Y>x_n)^{k+1}\right)\,,
$$
for all fixed $k\ge1$, as $n\to\infty$.
\end{lemma}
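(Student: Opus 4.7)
My plan starts by rewriting the intersection: the event $\bigcap_{j \in C_{nk}} \{\sum_{i \in j^c} X_{ni} > x_n\}$ coincides with $\{\sum_{i=k+1}^{n} X_n^{(i)} > x_n\}$, where $X_n^{(1)} \ge \cdots \ge X_n^{(n)}$ are the decreasing order statistics, because the minimum of $\sum_{i \in j^c} X_{ni}$ over $k$-subsets $j \in C_{nk}$ is attained when $j$ indexes the top $k$ entries. The event of interest is therefore $A_n := \{S_n > kM_n\} \cap \{\sum_{i=k+1}^{n} X_n^{(i)} > x_n\}$.

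I would fix a small $\epsilon > 0$ and set $L := \{i : X_{ni} > \epsilon x_n\}$, then split by $|L|$. On $\{|L| \ge k+1\}$ a union bound gives $P(|L| \ge k+1) \le \binom{n}{k+1} P(Y > \epsilon x_n)^{k+1}$, which is $O(n^{k+1}P(Y > x_n)^{k+1})$ by regular variation, capturing the ``$k+1$ large jumps'' scenario.

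On $\{|L| \le k\}$, the top $k$ order statistics contain $L$, so the bottom $n-k$ satisfy $X_n^{(i)} \le \epsilon x_n$. Writing $m := |L|$, the bound $\sum_{i=1}^{k} X_n^{(i)} \le m M_n + (k-m) \epsilon x_n$ together with $S_n > kM_n$ forces $\sum_{i=k+1}^{n} X_n^{(i)} > (k-m)(M_n - \epsilon x_n)$, which is just $x_n$ when $m=k$ but of order $M_n$ when $m < k$. Conditioning on the $\binom{n}{m}$ choices of $L$ and using independence, the problem reduces to bounding $P(\sum_{j=1}^{n-m} X'_{nj} > t_{m,n})$ for i.i.d.\ copies satisfying $X'_{nj} \le \epsilon x_n$, with $t_{k,n} = x_n$ and $t_{m,n} \asymp (k-m) M_n$ when $m < k$.

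For the inner probability I would pass to the symmetric truncation $V_{nj} := X'_{nj} \one(|X'_{nj}| \le \epsilon x_n)$, noting that $\sum V_{nj} \ge \sum X'_{nj}$ because dropping large-negative summands only raises the total. A Rosenthal-type inequality of moment order $p > \alpha (k+1)$, combined with Karamata's theorem for the moments of $V_{n1}$ and the finite second moment assumption in the boundary case $\alpha = 2$, yields $P(\sum V_{nj} > x_n) = O(nP(Y > x_n))$ when $m = k$; multiplying by $\binom{n}{k} P(Y > \epsilon x_n)^k$ already gives the target order $O(n^{k+1} P(Y > x_n)^{k+1})$. The main obstacle is the case $m < k$, where the threshold is $\asymp M_n$ rather than $x_n$: I would split further by $|L^-|$ with $L^- := \{j : X'_{nj} < -\epsilon x_n\}$, so that $|L^-| \ge k - m + 1$ is controlled combinatorially by $\binom{n}{m}\binom{n-m}{k-m+1} P(Y>\epsilon x_n)^{k+1}$, while on $|L^-| \le k - m$ the event $\sum V_{nj} > (k-m) M_n$ is a tail estimate for a sum bounded by $\epsilon x_n$; Bernstein's inequality in the Poisson regime gives $\exp(-c (k-m) M_n / (\epsilon x_n))$, which combined with the factor $n^m P(Y > x_n)^m$ and the hypothesis $M_n / x_n \to \infty$ matches the target bound.
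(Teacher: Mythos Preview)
Your approach via order statistics and the split by $|L|$ is natural, and the cases $|L|\ge k+1$ and $|L|=k$ are essentially fine (for the latter, a Rosenthal inequality of sufficiently high order $p$ does give $P(\sum V_{nj}>x_n)=O(nP(Y>x_n))$, once one checks that the Gaussian term $(n\sigma^2)^{p/2}/x_n^p$ is dominated using $x_n\gg n^{1/2+\gamma}$). However, the $m<k$ case has a genuine gap: the Bernstein bound $\exp\bigl(-c(k-m)M_n/(\epsilon x_n)\bigr)$ is \emph{not} in general $O\bigl((nP(Y>x_n))^{k+1-m}\bigr)$, because the hypotheses impose no relationship between the rate at which $M_n/x_n\to\infty$ and the rate at which $nP(Y>x_n)\to0$. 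Concretely, take $\alpha\ge2$, $M_n=e^n$ and $x_n=e^n/\log n$ (so $x_n\gg n^{1/2+\gamma}$ and $x_n\ll M_n$): then your bound is $\exp(-c\log n)=n^{-c}$, while $(nP(Y>x_n))^{k+1-m}$ decays exponentially in $n$, so the required comparison fails for every fixed $c$ (equivalently, every fixed $\epsilon$). Your further split on $|L^-|$ does not rescue this, since the inclusion $\{\sum X'_{nj}>t\}\subset\{\sum V_{nj}>t\}$ already holds with no restriction on $|L^-|$, and the Bernstein estimate on $\sum V_{nj}$ does not improve on that subset.

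The paper closes this gap by two devices that your sketch lacks. First, rather than truncating at a level proportional to $x_n$, it introduces an intermediate threshold $z_n$ with $b_n\ll z_n\ll x_n$, chosen precisely so that $x_n/z_n\gg(nP(Y>x_n))^{-u}$ for some $u>0$; Prokhorov's inequality at level $z_n$ then yields a bound of the form $\exp\bigl(-K(nP(Y>x_n))^{-u}\bigr)$, which is $o\bigl((nP(Y>x_n))^{N}\bigr)$ for every $N$. Second --- and this is the structural point --- the cases $1\le m\le k-1$ are not handled by any direct tail bound but by \emph{induction on $k$}: after removing the $m$ large coordinates, the remaining event is exactly the event of the lemma with $k$ replaced by $k-m$, and the inductive hypothesis supplies the factor $(nP(Y>x_n))^{k-m+1}$. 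Without either the refined threshold $z_n$ or the induction, the $m<k$ contribution cannot be controlled at the required precision.
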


\begin{proof}
The proof is by induction on $k$. The proof for $k=1$ is very similar to the induction step, and hence we do not show the former separately. As the induction hypothesis, assume that the result is true for $1,\ldots,k-1$, and we shall show it for $k$. Observe that if $\alpha\ge2$, then
$$
b_n=O\left(n^{1/2+\gamma}\right)\,,
$$
and hence by \eqref{l1.eq1} and \eqref{l1.eq2}, it follows that
\begin{equation}\label{l1.eq3}
b_n\ll x_n,\,
\end{equation}
for all $\alpha$. Since, $p$ as defined in \eqref{defp}, is positive, it follows that $P(Y>\cdot)$ is regularly varying with index $-\alpha$.
It immediately follows from \eqref{eq.defbn} that
\begin{equation}\label{eq1}
\lim_{n\to\infty}nP(Y>b_n)=1\,.
\end{equation}
Fix
$$
\vep\in\left(0,\frac1{k+2}\right)\,,
$$
and let $u$ be such that
$$
0<u<\frac1\alpha\left(\frac1{k+2}-\vep\right)\,,
$$
and
\begin{equation}\label{l1.eq.new}
\frac{1-u}{2-\alpha u}<\frac12+\gamma\,.
\end{equation}
Notice that by \eqref{eq1},
$$
n^{-u}b_n^{1/(k+2)-\vep}\sim P(Y>b_n)^ub_n^{1/(k+2)-\vep}\ll P(Y>x_n)^ux_n^{1/(k+2)-\vep}\,,
$$
the inequality following from \eqref{l1.eq3} and the fact that the function $x$ going to $P(Y>x)^ux^{1/(k+2)-\vep}$, is regularly varying with a positive index, namely $1/(k+2)-\vep-u\alpha$. Thus,
\begin{equation}\label{l1.eq4}
b_n^{1/(k+2)-\vep}x_n^{(k+1)/(k+2)+\vep}\ll n^uP(Y>x_n)^ux_n\,.
\end{equation}
Let $\delta\in(0,2-\alpha u)$ be such that
$$
\frac{1-u}{2-\alpha u-\delta}\le\frac12+\gamma\,;
$$
such a $\delta$ exists because of \eqref{l1.eq.new}.
When $\alpha\ge2$, observe that
\begin{eqnarray*}
x_n^2P(Y>x_n)^u \gg x_n^{2-\alpha u-\delta} \gg n^{(1/2+\gamma)(2-\alpha u-\delta)}\ge n^{1-u}\,.
\end{eqnarray*}
In view of \eqref{l1.eq4} and the above inequality, there exists a sequence $(z_n)$ satisfying
\begin{equation}\label{l1.eq5}
b_n^{\frac1{k+2}-\vep}x_n^{\frac{k+1}{k+2}+\vep}\ll z_n\ll n^uP(Y>x_n)^ux_n,\,\mbox{if }\alpha<2\,,
\end{equation}
and
\begin{equation}\label{l1.eq6}
b_n^{\frac1{k+2}-\vep}x_n^{\frac{k+1}{k+2}+\vep}+\frac n{x_n}\ll z_n\ll n^uP(Y>x_n)^ux_n,\,\mbox{if }\alpha\ge2\,.
\end{equation}
Fix such a $(z_n)$.

For a set $A$ and $m\ge1$, let $S_m(A)$ denote the family of all subsets of $A$ that have cardinality $m$. Fix $1\le l\le k-1$. For $j\in C_{nl}\,,$ define
$$
D_j:=\bigcap_{i\in S_{n-k}(j^c)}\left\{\sum_{u=1}^{n-k}X_{ni_u}>x_n,\sum_{v\in j^c}X_{nv}>(k-l)M_n\right\}\,;
$$
recall the definitions of $C_{nl}$ and $j^c$ from the text preceding the statement of the current lemma.
Define the events
\begin{eqnarray*}
E_n&:=&\left\{|X_{nj}|>z_n\mbox{ for at least }(k+2)\mbox{ many }j\mbox{'s }\le n\right\}\,,\\
F_n&:=&\left\{\sum_{j=1}^nX_{nj}\one(|X_{nj}|\le z_n)>M_n-\frac{x_n}2\right\}\,,\\
G_n&:=&\left\{X_{nj}>\frac{x_n}{2(k+1)}\mbox{ for at least }(k+1)\mbox{ many }j\mbox{'s }\le n\right\}\,,\\
H_n&:=&\bigcup_{j\in C_{nk}}\Biggl\{X_{nj_i}>\frac{x_n}{2(k+1)}\mbox{ for }1\le i\le k\\
&&\mbox{ and }\sum_{i\in j^c}X_{ni}\one(|X_{ni}|\le z_n)>\frac{x_n}2\Biggr\}\,,\\
I_n&:=&\bigcup_{l=1}^{k-1}\bigcup_{j\in C_{nl}}\left[\left\{X_{nj_i}>\frac{x_n}{2(k+1)}\mbox{ for }1\le i\le l\right\}\cap D_j\right]\,.
\end{eqnarray*}
Our claim is that
\begin{equation}\label{l5.inclusion}
\bigcap_{j\in C_{nk}}\left\{\sum_{i\in j^c}X_{ni}>x_n,S_n>kM_n\right\}\subset E_n\cup F_n\cup G_n\cup H_n\cup I_n\,.
\end{equation}
To see this, if possible, fix a sample point in the left hand side, which is in neither of $E_n$, $F_n$, $G_n$, $H_n$ or $I_n$. Let
$$
l:=\#\left\{u\in\{1,\ldots,n\}:X_{nu}>\frac{x_n}{2(k+1)}\right\}\,.
$$
{\bf Case 1: $l=0$.} Write
$$
S_n:=\sum_{j=1}^nX_{nj}\one(|X_{nj}|\le z_n)+\sum_{j=1}^nX_{nj}\one(|X_{nj}|>z_n)\,.
$$
By the assumption that the sample point does not belong to $F_n$, it follows that the first sum on the right is at most $M_n-x_n/2$.
Since $E_n$ does not hold, the number of surviving summands in the second sum is at most $(k+1)$. As $l=0$, it follows that each summand is at most $x_n/2(k+1)$. Therefore, the second sum is at most $x_n/2$. Thus, $S_n\le M_n$, which is a contradiction.\\
{\bf Case 2: $1\le l\le k-1$.} Let $1\le j_1<\ldots<j_l\le n$ be such that
$$
X_{nj_i}>\frac{x_n}{2(k+1)}\mbox{ for all }1\le i\le l\,.
$$
Denote $j:=(j_1,\ldots,j_l)\in C_{nl}$. Clearly $D_j$ is a superset of the left hand side of \eqref{l5.inclusion}, and hence trivially the sample point is in $D_j$. Thus, the sample point is in $I_n$, which is a contradiction.\\
{\bf Case 3: $l=k$.} Once again, let $j_1<\ldots<j_k$ denote the indices $i$ for which $X_{ni}>x_n/2(k+1)$, and set $j:=(j_1,\ldots,j_k)$. Write
$$
\sum_{i\in j^c}X_{ni}=\sum_{i\in j^c}X_{ni}\one(|X_{ni}|\le z_n)+\sum_{i\in j^c}X_{ni}\one(|X_{ni}|>z_n)\,.
$$
Since $H_n$ does not hold, the first sum on the right is at most $x_n/2$. As $E_n$ does not hold, in the second sum, at most $(k+1)$ terms survive. Also, each summand in that sum is at most $x_n/2(k+1)$. Thus, the second sum is at most $x_n/2$. This shows that the left hand side is at most $x_n$, which clearly is a contradiction.\\
{\bf Case 4: $l\ge k+1$.} This case cannot arise because $G_n$ does not hold. Thus, the inclusion \eqref{l5.inclusion} is true.

In view of \eqref{l5.inclusion}, all that needs to be shown is that
\begin{equation}\label{l1.eq7}
P(E_n)+P(F_n)+P(G_n)+P(H_n)+P(I_n)=O\left[\left\{nP(Y>x_n)\right\}^{k+1}\right]\,.
\end{equation}
To that end, notice that
\begin{eqnarray}
P(E_n)&\le&n^{k+2}P\left(|Y|>z_n\right)^{k+2}\nonumber\\
&=&O\left[n^{k+2}P\left(Y>z_n\right)^{k+2}\right]\,,\label{l1.eq9}
\end{eqnarray}
the second step following from the assumption that $p$, as defined in \eqref{defp}, is positive.
By \eqref{l1.eq3} and \eqref{eq1}, it follows that
$$
b_n^{1/(k+2)-\vep}x_n^{(k+1)/(k+2)+\vep}\gg b_n\,,
$$
and
\begin{equation}\label{l1.eq10}
\lim_{n\to\infty}nP(Y>x_n)=0\,.
\end{equation}
These, in view of \eqref{l1.eq5} and \eqref{l1.eq6}, show that
\begin{equation}\label{l1.eq8}
b_n\ll z_n\ll x_n\,.
\end{equation}
Set
$$
\theta:=\alpha(k+2)\vep\,,
$$
and observe that as $n\to\infty$,
\begin{eqnarray*}
\frac{n^{k+2}P(Y>z_n)^{k+2}}{n^{k+1}P(Y>x_n)^{k+1}}&\sim&\left\{\frac{P(Y>z_n)}{P(Y>x_n)}\right\}^{k+1}\frac{P(Y>z_n)}{P(Y>b_n)}\\
&\le&2\left\{\left(\frac{z_n}{x_n}\right)^{-\alpha-\theta/(k+1)}\right\}^{k+1}\left(\frac {z_n}{b_n}\right)^{-\alpha+\theta}\\
&=&2\left(\frac{z_n}{b_n^{1/(k+2)-\vep}x_n^{(k+1)/(k+2)+\vep}}\right)^{-\alpha(k+2)}\\
&\to&0\,,
\end{eqnarray*}
the inequality in the second line holding for large $n$, and following by the Potter bounds (Proposition 2.6 in \cite{resnick:2007}) and \eqref{l1.eq8}. This, in view of \eqref{l1.eq9}, shows that
$$
P(E_n)=o\left(n^{k+1}P(Y>x_n)^{k+1}\right)\,.
$$
It is obvious that
$$
P(G_n)=O\left(n^{k+1}P(Y>x_n)^{k+1}\right)\,.
$$
Next we proceed to show that
\begin{equation}\label{l1.eq11}
P(H_n)=O\left(n^{k+1}P(Y>x_n)^{k+1}\right)\,.
\end{equation}
To that end, we shall use a result from \cite{prokhorov:1959}, which states that if $T_1, T_2,\ldots,T_N$ are independent zero mean random variables such that $|T_j|\le C$ for all $1\le j\le n$, then
\begin{equation}\label{l1.prokhorov}
P\left(\sum_{j=1}^NT_j>\lambda\right)\le\exp\left\{-\frac\lambda{2C}\sinh^{-1}\frac{\lambda C}{2\Var(\sum_{j=1}^NT_j)}\right\}\,.
\end{equation}
By the above, it follows that
\begin{eqnarray*}
&&P(H_n)\\
&\le&n^kP\left(Y>\frac{x_n}{2(k+1)}\right)^kP\left(\sum_{j=k+1}^nX_{nj}\one(|X_{nj}|\le z_n)>\frac12x_n\right)\\
&=&O\left[P\left(\sum_{j=k+1}^nX_{nj}\one(|X_{nj}|\le z_n)>\frac12x_n\right)\right]\,,
\end{eqnarray*}
the last step following from \eqref{l1.eq10}. The assumption \eqref{eq.zeromean} implies that for $n$ large,
\begin{eqnarray}
nE\left[X_{n1}\one(|X_{n1}|\le z_n)\right]&=&-nE\left[Y\one(|Y|>z_n)\right]\nonumber\\
&=&O\left(nz_nP(Y>z_n)\right)\nonumber\\
&=&o\left(nx_nP(Y>x_n)\right)\nonumber\\
&=&o(x_n)\,,\label{l1.eq17}
\end{eqnarray}
where the second step follows from Karamata's theorem; see Theorem 2.1 in \cite{resnick:2007}.
Thus, for $n$ large enough,
\begin{eqnarray}
&&P\left(\sum_{j=k+1}^nX_{nj}\one(|X_{nj}|\le z_n)>\frac12x_n\right)\nonumber\\
&\le& P\left(\sum_{j=k+1}^nX_{nj}\one(|X_{nj}|\le z_n)-(n-k)E\left[X_{n1}\one(|X_{n1}|\le z_n)\right]>\frac14x_n\right)\nonumber
\end{eqnarray}
\begin{equation}
\le\exp\left\{-K_1\frac{x_n}{z_n}\sinh^{-1}K_2\frac{z_nx_n}{(n-k)\Var(X_{n1}\one(|X_{n1}|\le z_n))}\right\}\,,\label{l1.eq15}
\end{equation}
for some finite positive constants $K_1$ and $K_2$. Thus, for \eqref{l1.eq11}, it suffices to show that
\begin{eqnarray}
x_nz_n&\gg&n\Var(X_{n1}\one(|X_{n1}|\le z_n))\,,\label{l1.eq13}\\
\mbox{and }\frac{x_n}{z_n}&\gg&\left(nP(Y>x_n)\right)^{-u}\,.\label{l1.eq14}
\end{eqnarray}
This is because if the above hold, then by \eqref{l1.eq15} and the fact that
$$
\lim_{x\to\infty}\sinh^{-1}x=\infty\,,
$$
it follows that,
$$
P(F_n)=O\left(\exp\left\{-K_1(nP(Y>x_n))^{-u}\right\}\right)=o\left[(nP(Y>x_n))^{k+1}\right]\,.
$$
Notice that \eqref{l1.eq14} is a restatement of the second inequalities in \eqref{l1.eq5} and \eqref{l1.eq6}. We shall show \eqref{l1.eq13} separately for the cases $\alpha<2$ and $\alpha\ge2$.\\
{\bf Case $\alpha<2$:} By the Karamata's theorem, it follows that
\begin{eqnarray*}
n\Var(X_{n1}\one(|X_{n1}|\le z_n))&=&O\left(nz_n^2P(Y>z_n)\right)\\
&=&o(z_n^2)\\
&=&o(z_nx_n)\,.
\end{eqnarray*}
{\bf Case $\alpha\ge2$:} The assumption \eqref{eq.fv} implies that
\begin{eqnarray*}
n\Var(X_{n1}\one(|X_{n1}|\le z_n))&=&O(n)\\
&=&o(x_nz_n)\,,
\end{eqnarray*}
the last step following from the left inequality in \eqref{l1.eq6}. Thus, \eqref{l1.eq13} holds, and hence so does \eqref{l1.eq11}.

By similar arguments as above, the fact that
\begin{equation}\label{l1.eq16}
P(F_n)=O\left(n^{k+1}P(Y>x_n)^{k+1}\right)
\end{equation}
will follow once it can be shown that
$$
nE\left[X_{n1}\one(|X_{n1}|\le z_n)\right]=o(\hat M_n)\,,
$$
$$
\hat M_nz_n\gg n\Var(X_{n1}\one(|X_{n1}|\le z_n))\,,
$$
and
$$
\frac{\hat M_n}{z_n}\gg \left(nP(Y>x_n)\right)^{-u}\,,
$$
where
$$
\hat M_n:=M_n-\frac12x_n\,.
$$
These, follow immediately from \eqref{l1.eq17}, \eqref{l1.eq13} and \eqref{l1.eq14} respectively, along with the fact that $x_n\ll\hat M_n$. This establishes \eqref{l1.eq16}.

To show \eqref{l1.eq7}, and thus complete the proof of the lemma, all that needs to be shown is
\begin{equation}\label{l1.eq18}
P(I_n)=O\left(n^{k+1}P(Y>x_n)^{k+1}\right)\,.
\end{equation}
This is the only place where we shall use the induction hypothesis; note that when $k=1$, $I_n$ is the null event, and hence the above arguments give a complete proof for that case. To the end of proving this for general $k$, observe that
\begin{eqnarray*}
P(I_n)&\le&\sum_{l=1}^{k-1}\sum_{j\in C_{nl}}P\left(\left\{X_{nj_i}>\frac{x_n}{2(k+1)}\mbox{ for }1\le i\le l\right\}\cap D_j\right)\\
&\le&\sum_{l=1}^{k-1}\nchoosel P\left(Y>\frac{x_n}{2(k+1)}\right)^l P\left(D_{(1,\ldots,l)}\right)\\
&=&\sum_{l=1}^{k-1}O\left[n^lP\left(Y>{x_n}\right)^lP\left(D_{(1,\ldots,l)}\right)\right]\,.
\end{eqnarray*}
Notice that,
$$
P\left(D_{(1,\ldots,l)}\right)=O\left(n^{k-l+1}P(Y>x_n)^{k-l+1}\right)
$$
for all fixed $l=1,\ldots,k-1$, since by the induction hypothesis, the claim of the theorem is true for $k$ replaced by $k-l$.  This shows \eqref{l1.eq18}, and consequently the result for $k$, that is, the induction step, which in turn completes the proof.
\end{proof}

\begin{lemma}\label{l2}
For $k\ge1$, there exists a function $\vep_k:(0,1)\times(0,\infty)\to\bbr$ such that
\begin{equation}\label{l2.claim}
\lim_{T\to\infty,u\downarrow0}u^{-k}\vep_k(u,T)=0\,,
\end{equation}
and
\begin{equation}\label{l2.eq3}
\frac{P\left(\sum_{i=1}^kX_{ni}>(k-u)M_n\right)}{P(Y>M_n)^k}=\frac{\alpha^k}{k!}u^k+\vep_k(u,M_n)\,,
\end{equation}
for all $n\ge1$, $0<u<1$.
\end{lemma}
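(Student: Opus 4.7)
The key observation is that on the event $\{\sum_{i=1}^k X_{ni}>(k-u)M_n\}$ every $X_{ni}$ must exceed $(1-u)M_n$: since $X_{ni}\le M_n$ always, any $X_{nj}\le(1-u)M_n$ would force $\sum_{i=1}^k X_{ni}\le(k-u)M_n$. In particular $X_{ni}=Y_i$ on this event, so the truncation is invisible and $Y_i\in I_u:=((1-u)M_n,M_n]$ for all $i$. Independence then yields the factorization
$$
P\!\left(\sum_{i=1}^k X_{ni}>(k-u)M_n\right)=P(Y\in I_u)^k\cdot P\!\left(\sum_{i=1}^k Y_i>(k-u)M_n\,\Big|\,Y_i\in I_u\ \forall i\right),
$$
and I will estimate the two factors separately.

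For the first factor, given $\delta>0$ apply \eqref{assume} with $a=1-u$, $b=1$; provided $M_n\ge T_0$ and $u\le u_0$,
$$
\frac{P(Y\in I_u)}{P(Y>M_n)}=(1-u)^{-\alpha}-1+O(\delta u)=\alpha u+O(u^2)+O(\delta u),
$$
the $O(u^2)$ coming from the Taylor expansion of $(1-u)^{-\alpha}$. Raising to the $k$-th power (with $k$ fixed) gives $P(Y\in I_u)^k/P(Y>M_n)^k=(\alpha u)^k\bigl(1+O(u)+O(\delta)\bigr)$.

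For the second factor, set $V_i:=(M_n-Y_i)/(uM_n)$, so that conditional on $Y_i\in I_u$ we have $V_i\in[0,1)$ and $\sum Y_i>(k-u)M_n\Longleftrightarrow\sum V_i<1$. A direct application of \eqref{assume} twice (with $a=1-vu,b=1$ in the numerator and $a=1-u,b=1$ in the denominator) combined with the Taylor expansion of $(1-vu)^{-\alpha}$ gives
$$
\sup_{v\in[0,1]}\bigl|P(V_1\le v\mid Y_1\in I_u)-v\bigr|=O(u)+O(\delta),
$$
so the conditional law of each $V_i$ is Kolmogorov-close to $\text{Uniform}[0,1]$. With $U_1,\dots,U_k$ i.i.d.\ uniform, a standard one-coordinate Lindeberg swap (replace $V_i$ by $U_i$ one at a time, noting that the indicator $\one(W+\cdot<1)$ conditioned on any independent $W$ changes in expectation by at most $\sup_v|F_{V_1}(v)-v|$) bounds
$$
\left|P\!\left(\sum_{i=1}^k V_i<1\,\Big|\,\text{event}\right)-P\!\left(\sum_{i=1}^k U_i<1\right)\right|\le k\sup_v\bigl|F_{V_1}(v)-v\bigr|=O(u)+O(\delta),
$$
and since $P(\sum U_i<1)=1/k!$, the conditional factor equals $1/k!+O(u)+O(\delta)$.

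Multiplying the two estimates,
$$
\frac{P\!\left(\sum_{i=1}^k X_{ni}>(k-u)M_n\right)}{P(Y>M_n)^k}=\frac{\alpha^k u^k}{k!}+(\alpha u)^k\bigl(O(u)+O(\delta)\bigr),
$$
so defining $\vep_k(u,M_n)$ as the arithmetic difference gives $u^{-k}\vep_k(u,M_n)=O(u)+O(\delta)\to 0$ as $u\downarrow 0$ and $M_n\to\infty$ (taking $T_0$ large forces $\delta\downarrow 0$). For $(u,T)$ outside the admissible range simply define $\vep_k$ by the same difference; the conclusion only constrains the joint limit. The only bookkeeping obstacle is propagating the Potter-type error in \eqref{assume} consistently through both the Taylor expansions and the telescoping step, which is routine; there is no further analytic subtlety.
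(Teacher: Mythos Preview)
Your argument is correct and takes a genuinely different route from the paper's proof. The paper proceeds by induction on $k$: it defines $\vep_1$ directly from \eqref{assume}, then writes $P(\sum_{i=1}^{k+1}X_{ni}>(k+1-u)M_n)$ as an integral in the last coordinate, applies the inductive hypothesis inside the integral, and verifies the limit \eqref{l2.claim} for the resulting recursively defined $\vep_{k+1}$. Your approach is direct rather than inductive: you exploit the geometric observation that every $Y_i$ is forced into the thin window $((1-u)M_n,M_n]$, factor out $P(Y\in I_u)^k$, rescale so that the conditional coordinates are approximately i.i.d.\ uniform on $[0,1]$, and read off the simplex volume $P(\sum_{i=1}^k U_i<1)=1/k!$ via a one-coordinate replacement. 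This is slicker for fixed $k$ and makes the constant $\alpha^k/k!$ appear transparently as (density at the right endpoint)$^k$ times a simplex volume; the paper's induction, on the other hand, is more mechanical and packages the error term $\vep_k$ in a form that plugs directly into the later integral manipulations in the proof of Theorem~\ref{t1}. One small point worth making explicit in your write-up: the replacement bound uses $P(V_j<1-w)$ with a strict inequality, and this is exactly what \eqref{assume} controls (since $V_j<v$ translates to $Y_j>(1-vu)M_n$), so possible atoms of $Y$ cause no trouble.
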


\begin{proof}
The proof is by induction on $k$. First let us show the result for $k=1$. To that end, define
$$
\vep_1(u,T):=\frac{P(Y>(1-u)T)-P(Y>T)}{P(Y>T)}-\alpha u\,,
$$
and notice that \eqref{l2.eq3} holds with this $\vep_1(\cdot,\cdot)$. Also,
$$
|\vep_1(u,T)|\le\left|\frac{P(Y>(1-u)T)-P(Y>T)}{P(Y>T)}-\left\{(1-u)^{-\alpha}-1\right\}\right|
$$
$$
\,\,\,\,\,+\left|\left\{(1-u)^{-\alpha}-1\right\}-\alpha u\right|\,.
$$
By \eqref{assume}, the first term on the right hand side is $o(u)$ as $T\to\infty$ and $u\downarrow0$. The second term is $o(u)$ by Taylor's theorem. This shows the result for $k=1$. Suppose now that the claim is true for $k$; we shall show the same for $k+1$. Clearly, for $n\ge1$ and $0<u<1$,
\begin{eqnarray*}
&&P\left(\sum_{i=1}^{k+1}X_{ni}>(k+1-u)M_n\right)\\
&=&\int_{(1-u,1]}P\left(\sum_{i=1}^kX_{ni}>\left\{k-(u+x-1)\right\}M_n\right)P\left(M_n^{-1}Y\in dx\right)\\
&=&P(Y>M_n)^k\int_{(1-u,1]}\left[\frac{\alpha^k}{k!}(u+x-1)^k+\vep_k(u+x-1,M_n)\right]\\
&&\,\,\,\,\,\,\,\,\,\,\,\,\,\,\,\,\,\,\,\,\,\,\,\,\,\,\,\,\,\,\,\,\,\,\,\,\,\,\,\,\,\,\,\,\,\,\,\,\,\,\,\,\,\,\,\,\,\,\,\,\,\,\,\,\,\,\,\,\,\,\,\,P\left(M_n^{-1}Y\in dx\right)\,.
\end{eqnarray*}
Define
$$
\vep_{k+1}(u,T):=
$$
$$
\int_{(1-u,1]}\left[\frac{\alpha^k}{k!}(u+x-1)^k+\vep_k(u+x-1,T)\right]\frac{P\left(T^{-1}Y\in dx\right)}{P(Y>T)}-\frac{\alpha^{k+1}}{(k+1)!}u^{k+1}\,.
$$
To complete the proof, all that needs to be shown is
\begin{equation}\label{l2.eq1}
\lim_{T\to\infty,u\downarrow0}u^{-(k+1)}\vep_{k+1}(u,T)=0\,.
\end{equation}
To that end, we shall first show
\begin{equation}\label{l2.eq2}
\lim_{T\to\infty,u\downarrow0}u^{-(k+1)}\int_{(1-u,1]}(u+x-1)^k\frac{P\left(T^{-1}Y\in dx\right)}{P(Y>T)}=\frac\alpha{k+1}\,.
\end{equation}
Fix $\alpha>\delta>0$, and $T_0>0$, $u_0\in(0,1)$ such that \eqref{assume} holds. Then, for $T\ge T_0$ and
$1-u_0\le a<b\le1$,
$$
\alpha-\delta\le\frac{P(Y\in(aT,bT])}{(b-a)P(Y>T)}\le\alpha(1-u_0)^{-\alpha-1}+\delta\,.
$$
Thus, for $T\ge T_0$ and $0<u\le u_0$,
$$
(\alpha-\delta)\int_{1-u}^1(u+x-1)^kdx\le\int_{(1-u,1]}(u+x-1)^k\frac{P\left(T^{-1}Y\in dx\right)}{P(Y>T)}
$$
\begin{equation}\label{l2.eq4}
\le\left\{\alpha(1-u_0)^{-\alpha-1}+\delta\right\}\int_{1-u}^1(u+x-1)^kdx\,.
\end{equation}
Since $\delta$ and $u_0$ can be chosen to be arbitrarily small, it follows that as $T\to\infty$ and $u\downarrow0$,
\begin{eqnarray*}
\int_{(1-u,1]}(u+x-1)^k\frac{P\left(T^{-1}Y\in dx\right)}{P(Y>T)}&\sim&\alpha\int_{1-u}^1(u+x-1)^kdx\\
&=&\frac\alpha{k+1}u^{k+1}\,.
\end{eqnarray*}
This shows \eqref{l2.eq2}. Now using the induction hypothesis that
$$
\lim_{T\to\infty,u\downarrow0}u^{-k}\vep_k(u,T)=0\,,
$$
and the above computations, \eqref{l2.eq1} follows. This establishes the induction step, and thus completes the proof of the lemma.
\end{proof}

\begin{lemma}\label{l3}
Under the assumptions of Theorem \ref{t1},
\begin{equation}\label{l3.eq1}
\lim_{n\to\infty}\int_0^\infty s^kP\left(c_n^{-1}S_n\in ds\right)=\int_0^\infty s^kP(Z_\alpha\in ds)<\infty\,,
\end{equation}
 where $(c_n)$ and $Z_\alpha$ are as defined in the statement of that theorem.
\end{lemma}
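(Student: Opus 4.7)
The plan is to combine weak convergence $c_n^{-1}S_n\weak Z_\alpha$ with uniform integrability of $\{|c_n^{-1}S_n|^k\}_{n\ge 1}$; together these give convergence of positive $k$-th moments via the continuous mapping theorem applied to $x\mapsto(x^+)^k$. Finiteness of the right-hand side is immediate: for $\alpha\ge 2$ the limit $Z_\alpha$ is Gaussian with all moments finite, while for $\alpha<2$ the integer constraint $k\le\alpha$ together with $\alpha>1$ forces $k=1<\alpha$, giving $E|Z_\alpha|<\infty$. Weak convergence is essentially free: since $P(S_n\neq T_n)\le nP(|Y|>M_n)\to 0$ where $T_n:=\sum_{j=1}^nY_j$, it suffices to invoke the classical CLT when $\alpha\ge 2$ (using \eqref{eq.zeromean} and \eqref{eq.fv}) or the stable limit theorem when $\alpha<2$ (no centering needed, as $EY=0$ and $\alpha>1$).

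The heart of the proof is the uniform integrability. When $\alpha>k$ strictly, I would pick $\eta>0$ with $k+\eta<\alpha$, so that $E|X_{n1}|^{k+\eta}\le E|Y|^{k+\eta}<\infty$ uniformly in $n$, and apply Rosenthal's inequality (when $k\ge 2$) or the von Bahr--Esseen inequality (when $k=1$ and $1<\alpha<2$) to the centered sum $\sum_{j=1}^n(X_{nj}-EX_{n1})$. Together with the routine estimate $n|EX_{n1}|=O(nM_nP(|Y|>M_n))=o(c_n)$ (via Karamata's theorem and the lower bounds on $M_n$), this yields $\sup_nE|c_n^{-1}S_n|^{k+\eta}<\infty$, which is more than enough for UI of $|c_n^{-1}S_n|^k$.

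The boundary case $\alpha=k$ must be handled separately, via classical moment convergence plus control of the truncation error. The first step is to show $E|Y|^k<\infty$: for $k=1$ this is automatic, for $k=2$ it is \eqref{eq.fv}, and for $k\ge 3$ it follows from \eqref{l3.eq2} combined with \eqref{defp}, since $p>0$ forces $P(Y<-x)\le CP(Y>x)$ for large $x$, so finiteness of $E(Y^+)^k$ passes to $E(Y^-)^k$. Given $E|Y|^k<\infty$ and $EY=0$, the classical moment-convergence result for the CLT (or the stable limit theorem combined with a Nagaev-type bound when $\alpha<2$, $k=1$) gives $E|c_n^{-1}T_n|^k\to E|Z_\alpha|^k$. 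I would then show the truncation error $R_n:=T_n-S_n=\sum_{j=1}^nY_j\one(|Y_j|>M_n)$ satisfies $E|R_n|^k=o(c_n^k)$ by applying Rosenthal's inequality together with dominated convergence. The elementary bound $|a^k-b^k|\le k|a-b|\max(|a|,|b|)^{k-1}$ combined with H\"older's inequality then upgrades this to $E|c_n^{-1}S_n|^k\to E|Z_\alpha|^k$, which together with the weak convergence forces UI of $|c_n^{-1}S_n|^k$. The hardest step will be the $\alpha=k\ge 3$ subcase, where both the extraction of $E|Y|^k<\infty$ from the one-sided hypothesis \eqref{l3.eq2}, and the transfer of the classical moment convergence from $T_n$ to the truncated $S_n$, require a careful interplay of Karamata estimates, Rosenthal's inequality and dominated convergence.
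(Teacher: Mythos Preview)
Your overall strategy---weak convergence plus uniform integrability of $|c_n^{-1}S_n|^k$---is sound, and for $\alpha\ge 2$ your Rosenthal argument goes through essentially as written. But the case $1<\alpha<2$ (which forces $k=1$, $c_n=b_n$) has a genuine gap: von~Bahr--Esseen gives only
\[
E\Bigl|\,b_n^{-1}\sum_{j=1}^n\bigl(X_{nj}-EX_{n1}\bigr)\Bigr|^{1+\eta}
\;\le\; C\,n\,b_n^{-(1+\eta)}\,E|Y|^{1+\eta},
\]
and since $nP(Y>b_n)\to 1$ and $P(Y>\cdot)$ is regularly varying of index $-\alpha$, one has $n\asymp b_n^{\alpha}\ell(b_n)$, so $n\,b_n^{-(1+\eta)}\asymp b_n^{\alpha-1-\eta}\ell(b_n)\to\infty$ whenever $1+\eta<\alpha$. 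Thus the bound diverges and you do not obtain $\sup_nE|b_n^{-1}S_n|^{1+\eta}<\infty$. The problem is structural: for i.i.d.\ centered summands in the domain of attraction of an $\alpha$-stable law with $\alpha<2$, the correct order of $E|\sum_{j\le n}\xi_j|^p$ for $p<\alpha$ is $b_n^p\asymp n^{p/\alpha}\ll n$, and von~Bahr--Esseen simply cannot see this cancellation. (Your parenthetical about ``the stable limit theorem combined with a Nagaev-type bound when $\alpha<2$, $k=1$'' in the $\alpha=k$ paragraph is also moot: $\alpha=k=1$ is excluded by the standing hypothesis $\alpha>1$.)

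The paper avoids this difficulty by not bounding moments of the \emph{sum} at all. It applies the de~Acosta--Gin\'e moment-convergence theorem directly to the triangular array $V_{nj}:=c_n^{-1}X_{nj}$ with the submultiplicative test function $\phi(x)=2+(x^+)^k$, so that the only analytic work is to verify the Lindeberg-type condition
\[
\lim_{T\to\infty}\limsup_{n\to\infty}\,n\,E\bigl[\phi(c_n^{-1}X_{n1})\one(|X_{n1}|>Tc_n)\bigr]=0,
\]
a one-summand tail quantity that reduces, via Karamata, to $nP(|Y|>Tc_n)\to CT^{-\alpha}$ and $nc_n^{-k}\int_{[Tc_n,\infty)}y^k\,P(Y\in dy)\to C'T^{k-\alpha}$ (with separate handling of $\alpha=k$ using \eqref{l3.eq2}). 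This is exactly the place where your approach and the paper's diverge: by working summand-by-summand the paper needs only regular-variation estimates, whereas your route requires a sharp $L^p$ inequality for the whole sum, which in the stable regime you do not have without invoking something of the strength of de~Acosta--Gin\'e itself.
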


\begin{proof}
Since $\alpha$ is always assumed to be larger than $1$, notice that either $k<\alpha<2$ or $\alpha\ge2$ always holds. In the former case, $Z_\alpha$ has an $\alpha$-stable distribution, and hence the integral on the right hand side of \eqref{l3.eq1} is finite because $k<\alpha$. In the latter case $Z_\alpha$ is a normal random variable, and hence the integral is finite for all $k$.

Notice that the assumption \eqref{eq.zeromean} implies that as $n\to\infty$,
$$
c_n^{-1}\sum_{j=1}^nY_j\Longrightarrow Z_\alpha\,.
$$
Recall that $S_n$, defined in \eqref{eq.rowsum}, is the row sum of the triangular array $\{X_{nj}:1\le j\le n\}$.
Notice that
\begin{eqnarray*}
P\left(S_n\neq\sum_{j=1}^nY_j\right)&\le&nP(|Y|>M_n)\\
&=&o(1)\,,
\end{eqnarray*}
the last equality being an immediate consequence of \eqref{eq.hard:alphal2} and \eqref{eq.hard:alphage2}. Thus, it follows that
\begin{equation}\label{eq.conv}
c_n^{-1}S_n\Longrightarrow Z_\alpha\,.
\end{equation}

For the proof, we shall use Theorem 3.2 in \cite{acosta:gine:1979}, which states the following:\\
Let $\{V_{nj}:1\le j\le n\}$ be a triangular array satisfying
\begin{equation}\label{l6.eq1}
\lim_{n\to\infty}\max_{1\le j\le n}P\left(|V_{nj}|>\vep\right)=0\mbox{ for all }\vep>0\,,
\end{equation}
and the row sum $\tilde S_n:=\sum_{j=1}^nV_{nj}$ converges weakly to a probability measure $\nu$. If $\phi$ is a continuous function from $\bbr$ to $[0,\infty)$ such that there exists $a\in(0,\infty)$ with
\begin{equation}\label{l6.eq2}
\phi(x+y)\le a\phi(x)\phi(y)\mbox{ for all }x,y\,,
\end{equation}
and
\begin{equation}\label{l6.eq3}
\lim_{T\to\infty}\sup_{n\ge1}\sum_{j=1}^nE\left[\phi(V_{nj})\one(|V_{nj}|>T)\right]=0\,,
\end{equation}
then
$$
\lim_{n\to\infty}E\phi(\tilde S_n)=\int_\bbr\phi d\nu\,.
$$
The plan is to use the above result with $\phi$ defined by
$$
\phi(x):=2+|x|^k\one(x>0)\,.
$$
A quick inspection will reveal that $\phi$ thus defined, satisfies \eqref{l6.eq2} with $a=2^k$. Let $\{X_{nj}\}$ be as defined in \eqref{eq.model}, and set
$$
V_{nj}:=c_n^{-1}X_{nj},\,1\le j\le n\,.
$$
Thus, checking \eqref{l6.eq1} and \eqref{l6.eq3} suffices for the proof of the lemma. Verifying \eqref{l6.eq1} is trivial. A quick argument will yield that instead of showing \eqref{l6.eq3}, it is enough to show
\begin{equation}\label{l3.eq6}
\lim_{T\to\infty}\limsup_{n\to\infty}\sum_{j=1}^nE\left[\phi(V_{nj})\one(|V_{nj}|>T)\right]=0\,.
\end{equation}
For \eqref{l3.eq6}, observe that
\begin{eqnarray*}
&&\sum_{j=1}^nE\left[\phi(V_{nj})\one(|V_{nj}|>T)\right]\\
&=&2nP\left(|X_{n1}|>c_nT\right)+nc_n^{-k}E\left[X_{n1}^k\one\left(X_{n1}>c_nT\right)\right]\\
&\le&2nP\left(|Y|>c_nT\right)+nc_n^{-k}\int_{[c_nT,\infty)} y^kP(Y\in dy)\,.
\end{eqnarray*}

By \eqref{eq.fv}, it follows that when $\alpha\ge2$,
\begin{equation}\label{eq.alpha2}
P(Y>x)=o(x^{-2})\mbox{ as }x\to\infty\,.
\end{equation}
This, along with \eqref{eq1}, shows that
\begin{equation}\label{l3.eq4}
\limsup_{n\to\infty}nP(|Y|>c_n)<\infty\,,
\end{equation}
for all $\alpha$. In view of this, it follows that there is a finite constant $C$, independent of $T$ and $n$, satisfying
$$
\limsup_{n\to\infty}nP\left(|Y|>c_nT\right)=CT^{-\alpha}\,.
$$
Thus, it follows that
\begin{equation}\label{l3.eq5}
\lim_{T\to\infty}\limsup_{n\to\infty}nP\left(|Y|>c_nT\right)=0\,.
\end{equation}
For \eqref{l3.eq6}, it remains to show that
\begin{equation}\label{l3.eq3}
\lim_{T\to\infty}\limsup_{n\to\infty}nc_n^{-k}\int_{[c_nT,\infty)} y^kP(Y\in dy)=0\,.
\end{equation}
We shall prove \eqref{l3.eq3} separately for the cases $\alpha>k$ and $\alpha=k$.\\
{\bf Case $\alpha>k$:} Observe that for fixed $T>0$, by the Karamata's theorem,
\begin{eqnarray*}
\limsup_{n\to\infty}nc_n^{-k}\int_{[c_nT,\infty)} y^kP(Y\in dy)&=&\frac \alpha{\alpha-k}T^k\limsup_{n\to\infty}nP(Y>c_nT)\\
&\le&C\frac \alpha{\alpha-k}T^{k-\alpha}\,.
\end{eqnarray*}
This completes the proof of \eqref{l3.eq3}.\\
{\bf Case $\alpha=k$:} Since the assumption $\alpha>1$ is in force, for this case it is necessarily true that $\alpha=k\ge2$. Thus, for $T\ge1$,
\begin{eqnarray*}
nc_n^{-k}\int_{[c_nT,\infty)} y^kP(Y\in dy)&\le&nc_n^{-k}\int_{[c_n,\infty)} y^kP(Y\in dy)\\
&=&n^{1-k/2}\int_{[n^{1/2},\infty)} y^\alpha P(Y\in dy)\\
&\le&\int_{[n^{1/2},\infty)} y^\alpha P(Y\in dy)\,.
\end{eqnarray*}
By the assumptions \eqref{eq.fv} and  \eqref{l3.eq2}, it follows that rightmost quantity goes to zero as $n\to\infty$. This shows \eqref{l3.eq3}. Equations \eqref{l3.eq5} and \eqref{l3.eq3} establish \eqref{l3.eq6}, which completes the proof.
\end{proof}

\begin{proof}[Proof of Theorem \ref{t1}]
We start with proving the upper bound, that is,
\begin{equation}\label{t1.ub}
\limsup_{n\to\infty}\frac{P(S_n>kM_n)}{n^kP(Y>M_n)^{k}c_n^kM_n^{-k}}\le\frac{\alpha^k}{(k!)^2}\int_0^\infty s^kP(Z_\alpha\in ds)\,.
\end{equation}
To that end, observe that
\begin{equation}\label{t1.eq2}
M_n\gg c_n\,,
\end{equation}
which is a consequence of \eqref{eq.hard:alphal2} and \eqref{eq.hard:alphage2}.
Our first claim is that
\begin{equation}\label{t1.eq1}
M_n^kP(Y>M_n)\ll c_n^kn^{-1}\,.
\end{equation}
We shall show this separately for the cases $\alpha>k$ and $\alpha=k$.\\
{\bf Case $\alpha>k$:}  By \eqref{t1.eq2} and the fact that $\alpha>k$, it follows that
\begin{eqnarray*}
M_n^kP(Y>M_n)\ll c_n^kP(Y>c_n)=O\left(c_n^k/n\right)\,,
\end{eqnarray*}
the rightmost equality following from \eqref{l3.eq4}.\\
{\bf Case $\alpha=k$:} It is necessary that in this case $\alpha\ge2$. By \eqref{eq.alpha2} and the assumption \eqref{l3.eq2}, it follows that
$$
M_n^kP(Y>M_n)\ll1\le n^{k/2-1}=c_n^kn^{-1}\,.
$$
This completes the proof of \eqref{t1.eq1}. Thus, it follows that
\begin{equation}\label{t1.eq6}
P(Y>M_n)^{k+1}\ll c_n^kn^{-1}M_n^{-k}P(Y>M_n)^k\,.
\end{equation}
Set
$$
\beta:=\left\{\begin{array}{ll}0,&\alpha<2\,,\\\gamma,&\alpha\ge2\,,\end{array}\right.
$$
where $\gamma$ is same as that in \eqref{eq.hard:alphage2}. It follows that
$$
M_n\gg n^\beta c_n\,.
$$
This, along with \eqref{t1.eq6}, shows that
$$
P(Y>M_n)^{k+1}\ll\min\left\{c_n^kn^{-1}M_n^{-k}P(Y>M_n)^k,P(Y>n^\beta c_n)^{k+1}\right\}\,.
$$
The right hand side, clearly, goes to zero. Thus, there exists a sequence $(T_n)$ satisfying
\begin{eqnarray}
P(Y>M_n)^{k+1}&\ll&P(Y>T_nc_n)^{k+1}\nonumber\\
&\ll&\min\left\{c_n^kn^{-1}M_n^{-k}P(Y>M_n)^k,P(Y>n^\beta c_n)^{k+1}\right\}\,.\label{t1.eq3}
\end{eqnarray}
Let us record some quick consequences of the above. One immediate observation is that
\begin{equation}\label{t1.eq4}
n^\beta c_n\ll c_nT_n\ll M_n\,.
\end{equation}
In particular,
\begin{equation}\label{t1.eq5}
\lim_{n\to\infty}T_n=\infty\,.
\end{equation}
Notice that
$$
\{S_n>kM_n\}\subset\left(\bigcup_{j\in C_{nk}}\left\{\sum_{u=1}^kX_{nj_u}>kM_n-T_nc_n,S_n>kM_n\right\}\right)
$$
$$
\cup\left(\bigcap_{j\in C_{nk}}\left\{\sum_{i\in j^c}X_{ni}>T_nc_n, S_n>kM_n\right\}\right)\,,
$$
where $C_{nk}$, as defined earlier, denotes the set of  $k$-tuples\\ $j=(j_1,\ldots,j_k)$ such that $1\le j_1<\ldots<j_k\le n$. Thus,
\begin{eqnarray*}
&&P(S_n>kM_n)\\
&\le&\nchoosek P\left(\sum_{i=1}^kX_{ni}>kM_n-T_nc_n,S_n>kM_n\right)+\\
&&\,\,\,\,P\left(\bigcap_{j\in C_{nk}}\left\{\sum_{i\in j^c}X_{ni}>T_nc_n, S_n>kM_n\right\}\right)\\
&=:& Q_1+Q_2\,.
\end{eqnarray*}
Clearly,
\begin{eqnarray*}
&&\nchoosek^{-1}Q_1\\
&=&\int_{(0,T_n]}P\left(\sum_{i=1}^kX_{ni}>kM_n-sc_n\right)P\left(c_n^{-1}\sum_{i=k+1}^nX_{ni}\in ds\right)\\
&&+P\left(\sum_{i=1}^kX_{ni}>kM_n-T_nc_n\right)P\left(\sum_{i=k+1}^nX_{ni}>T_nc_n\right)\\
&=:&Q_{11}+Q_{12}\,.
\end{eqnarray*}
Denote
\begin{equation}\label{t3.defPn}
P_n(ds):=P\left(c_n^{-1}\sum_{i=k+1}^nX_{ni}\in ds\right)\,.
\end{equation}
By Lemma \ref{l2}, it follows that
\begin{eqnarray*}
Q_{11}&=&P(Y>M_n)^{k}\frac{\alpha^k}{k!}c_n^kM_n^{-k}\int_{(0,T_n]}s^kP_n(ds)\\
&&\,\,\,\,\,\,\,\,\,\,\,\,\,\,+P(Y>M_n)^{k}\int_{(0,T_n]}\vep_k\left(sc_nM_n^{-1},M_n\right)P_n(ds)\\
&=:&Q_{111}+Q_{112}\,,
\end{eqnarray*}
where $\vep_k(\cdot,\cdot)$ satisfies \eqref{l2.claim}. Clearly,
\begin{eqnarray*}
Q_{111}&\le&P(Y>M_n)^{k}\frac{\alpha^k}{k!}c_n^kM_n^{-k}\int_0^\infty s^kP_n(ds)\\
&\sim&P(Y>M_n)^{k}\frac{\alpha^k}{k!}c_n^kM_n^{-k}\int_0^\infty s^kP(Z_\alpha\in ds)\,,
\end{eqnarray*}
the equivalence following from Lemma \ref{l3}. Also,
\begin{eqnarray*}
|Q_{112}|&\le&P(Y>M_n)^{k}c_n^kM_n^{-k}\int_0^{\infty}s^kP_n(ds) \sup_{0<t\le c_nT_nM_n^{-1}}\frac{|\vep_k(t,M_n)|}{t^k}\\
&=&o\left(P(Y>M_n)^{k}c_n^kM_n^{-k}\right)\,,
\end{eqnarray*}
the second equality following by \eqref{l2.claim}, \eqref{t1.eq4} and Lemma \ref{l3}. Hence,
\begin{equation}\label{t1.eq7}
\limsup_{n\to\infty}\frac{Q_{11}}{P(Y>M_n)^{k}c_n^kM_n^{-k}}\le\frac{\alpha^k}{k!}\int_0^\infty s^kP(Z_\alpha\in ds)\,.
\end{equation}
Next, we proceed to show that
\begin{equation}\label{t1.eq8}
Q_{12}=o\left(P(Y>M_n)^{k}c_n^kM_n^{-k}\right)\,.
\end{equation}
To that end, notice that
$$
P\left(\sum_{i=k+1}^nX_{ni}>T_nc_n\right)\le P\left(\sum_{i=k+1}^nY_i>T_nc_n\right)+nP(|Y|>M_n)\,.
$$
It is well known that
$$
P\left(\sum_{i=k+1}^nY_i>T_nc_n\right)=O\left(nP(Y>T_nc_n\right)\,;
$$
see, for example, Lemma 2.1 in \cite{hult:lindskog:mikosch:samorodnitsky:2005}. This, in view of \eqref{t1.eq4}, shows that
$$
P\left(\sum_{i=k+1}^nX_{ni}>T_nc_n\right)=O\left(nP(Y>T_nc_n\right)\,.
$$
Using Lemma \ref{l2} once again, it follows that
$$
P\left(\sum_{i=1}^kX_{ni}>kM_n-T_nc_n\right)=O\left(P(Y>M_n)^k(T_nc_nM_n^{-1})^k\right)\,.
$$
Thus,
\begin{eqnarray*}
Q_{12}&=&O\left(nP(Y>T_nc_n)P(Y>M_n)^k(T_nc_nM_n^{-1})^k\right)\,.
\end{eqnarray*}
In view of this, \eqref{t1.eq8} will follow if it can be shown that
\begin{equation}\label{t1.eq9}
\lim_{n\to\infty}nP(Y>T_nc_n)T_n^k=0\,.
\end{equation}
Once again, \eqref{t1.eq9} will be shown separately for the cases $\alpha>k$ and $\alpha=k$.\\
{\bf Case $\alpha>k$:} Fix $\delta\in(0,\alpha-k)$. By \eqref{l3.eq4}, it follows that there exists $K<\infty$, independent of $n$, such that
\begin{eqnarray*}
nP(Y>T_nc_n)T_n^k&\le&K\frac{P(Y>T_nc_n)}{P(Y>c_n)}T_n^k\\
&\le&2KT_n^{k-\alpha+\delta}\,,
\end{eqnarray*}
for $n$ large enough, the second inequality following by the Potter bounds and \eqref{t1.eq5}. This shows \eqref{t1.eq9}.\\
{\bf Case $\alpha=k$:} By \eqref{eq.alpha2}, and the assumption \eqref{l3.eq2}, it is ensured that
$$
nP(Y>T_nc_n)T_n^k\ll nc_n^{-k}=n^{1-k/2}\le1\,.
$$
Thus, \eqref{t1.eq9} holds, and hence so does \eqref{t1.eq8}. By \eqref{t1.eq7} and \eqref{t1.eq8}, it follows that
\begin{equation}\label{t1.eq10}
\limsup_{n\to\infty}\frac{Q_{1}}{n^kP(Y>M_n)^{k}c_n^kM_n^{-k}}\le\frac{\alpha^k}{(k!)^2}\int_0^\infty s^kP(Z_\alpha\in ds)\,.
\end{equation}
By Lemma \ref{l1} and \eqref{t1.eq4}, it follows that
\begin{eqnarray*}
Q_2&=&O\left(n^{k+1}P(Y>T_nc_n)^{k+1}\right)\\
&=&o\left(n^kP(Y>M_n)^{k}c_n^kM_n^{-k}\right)\,,
\end{eqnarray*}
the second inequality following from \eqref{t1.eq3}. This, along with \eqref{t1.eq10}, completes the proof of the upper bound \eqref{t1.ub}.

Next we proceed to establish the lower bound, that is
\begin{equation}\label{t1.lb}
\liminf_{n\to\infty}\frac{P(S_n>kM_n)}{n^kP(Y>M_n)^{k}c_n^kM_n^{-k}}\ge\frac{\alpha^k}{(k!)^2}\int_0^\infty s^kP(Z_\alpha\in ds)\,.
\end{equation}
To that end, fix $T>0$ and define the event
$$
B_j:=\left\{\sum_{i=1}^kX_{nj_i}>kM_n-Tc_n,S_n>kM_n\right\},\,j=(j_1,\ldots,j_k)\in C_{nk}\,.
$$
Clearly,
\begin{eqnarray*}
P(S_n>kM_n)&\ge&P\left(\bigcup_{j\in C_{nk}}B_j\right)\\
&\ge&\sum_{j\in C_{nk}}P(B_j)-{\sum}_{j^1,j^2\in C_{nk},j^1\neq j^2}P\left(B_{j^1}\cap B_{j^2}\right)\\
&=&\nchoosek P\left(B_{(1,\ldots,k)}\right)-\sum_{j^1,j^2\in C_{nk},j^1\neq j^2}P\left(B_{j^1}\cap B_{j^2}\right)\\
&=:&Q_3-Q_4\,.
\end{eqnarray*}
Note that
\begin{eqnarray*}
Q_3&\sim&\frac{n^k}{k!}P\left(B_{(1,\ldots,k)}\right)\\
&\ge&\frac{n^k}{k!}\int_{(0,T)}P\left(\sum_{i=1}^kX_{ni}>kM_n-sc_n\right)P_n(ds)\,,
\end{eqnarray*}
where $P_n(\cdot)$ is as defined in \eqref{t3.defPn}. By Lemma \ref{l2}, \eqref{eq.conv} and arguments similar to those leading to \eqref{t1.eq7}, it follows that as $n\to\infty$,
\begin{eqnarray*}
&&\int_{(0,T)}P\left(\sum_{i=1}^kX_{ni}>kM_n-sc_n\right)\\
&\sim&\frac{\alpha^k}{k!}M_n^{-k}P(Y>M_n)^kc_n^{k}\int_{(0,T)} s^kP(Z_\alpha\in ds)\,.
\end{eqnarray*}
This shows that
$$
\liminf_{n\to\infty}\frac{Q_3}{M_n^{-k}P(Y>M_n)^kc_n^{k}n^k}\ge\frac{\alpha^k}{(k!)^2}\int_{(0,T)} s^kP(Z_\alpha\in ds)\,.
$$
If it can be shown that
\begin{eqnarray}
Q_4=o\left(M_n^{-k}P(Y>M_n)^kc_n^{k}n^k\right)\,,\label{t3.eq15}
\end{eqnarray}
then \eqref{t1.lb} will follow by letting $T\to\infty$. To that end, write
$$
Q_4=\sum_{l=0}^{k-1}\sum_{j^1,j^2\in C_{nk}:\#(j^1\cap j^2)=l}P\left(B_{j^1}\cap B_{j^2}\right)\,.
$$
Fix $l\in\{0,\ldots,k-1\}$ and $j^1,j^2\in C_{nk}$ such that $\#(j^1\cap j^2)=l$.
Notice that for $n$ so large that $c_n^{-1}M_n\ge2T$,
\begin{eqnarray*}
&&P\left(B_{j^1}\cap B_{j^2}\right)\\
&\le&P\left(\sum_{i=1}^kX_{ni}>(k-1/2)M_n,\sum_{i=k-l+1}^{2k-l}X_{ni}>(k-1/2)M_n\right)\\
&\le&P\left(X_{ni}>M_n/2,\,1\le i\le 2k-l\right)\\
&\le&K_lP(Y>M_n)^{2k-l}\,,
\end{eqnarray*}
for some $K_l$ independent of $j^1$ and $j^2$. Thus,
\begin{eqnarray*}
Q_4&=&\sum_{l=0}^{k-1}O\left(n^{2k-l}P(Y>M_n)^{2k-l}\right)\\
&=&O\left(n^{k+1}P(Y>M_n)^{k+1}\right)\\
&=&o\left(M_n^{-k}P(Y>M_n)^kc_n^{k}n^k\right)\,,
\end{eqnarray*}
the equality in the last line following from \eqref{t1.eq1}.
This shows \eqref{t3.eq15}, which in turn establishes the lower bound \eqref{t1.lb}, and thus completes the proof.
\end{proof}

\section{The case $\alpha<k/(k+2)$}\label{sec:alphalk}
 Suppose that the random variables $Y_1,Y_2,\ldots$ are as defined in the beginning of Section \ref{sec:alphagek}, {\bf except that now we assume $0<\alpha<1$}. The assumption \eqref{assume} is still in force. Let $(M_n)$ be a real sequence, the assumption on which will be stated in the main result, namely Theorem \ref{t2} below. Suppose that  $\{X_{nj}:1\le j\le n\}$ and $S_n$ are as defined in \eqref{eq.model} and \eqref{eq.rowsum} respectively. The above mentioned result studies the behavior of $P(S_n>kM_n)$ when $k$ is a positive integer satisfying $\alpha<k/(k+2)$.
For stating the result, we need some more notations. For $k\ge1$, define the function $c_k$ from $[k-1,k]$ to $[0,\infty]$ recursively, by
\begin{eqnarray*}
c_1(t)&=&t^{-\alpha}-1\mbox{ for }0\le t\le1\,,\\
c_{k+1}(t)&=&\alpha\int_{t-k}^1 c_k(t-z)z^{-\alpha-1}dz\mbox{ for }k\le t\le k+1,\,k\ge1.
\end{eqnarray*}
In the preceding definition, we have followed the convention that $0^{-\alpha}=\infty$.

\begin{theorem}\label{t2} For a positive integer $k$,
\begin{equation}\label{t2.claim}
c_{k+1}(k)<\infty\,.
\end{equation}
Suppose that $0<\alpha<k/(k+2)$ and
\begin{equation}\label{t2.eq1}
M_n^{1-\frac{\alpha(k+2)}k-\gamma}\gg n^{\frac1\alpha}
\end{equation}
for some $0<\gamma<1-\frac{\alpha(k+2)}k$.
Then, as $n\to\infty$,
\begin{equation}\label{t2.claim2}
P(S_n>kM_n)\sim\frac{c_{k+1}(k)}{(k+1)!}n^{k+1}P(Y>M_n)^{k+1}\,.
\end{equation}
\end{theorem}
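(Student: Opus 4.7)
The plan is to decompose the event $\{S_n>kM_n\}$ according to the number of summands that sit close to $M_n$, argue that the dominant contribution comes from exactly $k+1$ such summands, and compute that contribution explicitly. Since each $X_{nj}\le M_n$, at least $k+1$ of them must be of order $M_n$ to push the sum above $kM_n$, unless the remaining ``small'' summands compensate; the hypothesis $\alpha<k/(k+2)$ combined with the growth condition \eqref{t2.eq1} is what rules out such compensation.

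To prove \eqref{t2.claim}, unwind the recursion to obtain the representation
$$c_{k+1}(k)=\int_{\{u\in(0,1]^{k+1}\,:\,u_1+\cdots+u_{k+1}>k\}}\prod_{i=1}^{k+1}\alpha u_i^{-\alpha-1}\,du_i\,.$$
The substitution $v_i=1-u_i$ turns the integration region into the open simplex $\{v_i\ge0,\,v_1+\cdots+v_{k+1}<1\}$ and sends each $\alpha u_i^{-\alpha-1}$ to $\alpha(1-v_i)^{-\alpha-1}$. Singularities occur only at the vertices of this simplex, where one $v_i\to 1$. Setting $w=1-v_i$, the slice at fixed $w$ is a $k$-dimensional simplex of volume of order $w^k$, so the local contribution reduces to $\int_0^\vep w^{k-\alpha-1}\,dw$, finite whenever $\alpha<k$. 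Since $\alpha<k/(k+2)<k$, the integral is finite.

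Next I would establish the auxiliary asymptotic
$$P\bigl(X_{n1}+\cdots+X_{n,k+1}>kM_n-r_n\bigr)\sim c_{k+1}(k)\,P(Y>M_n)^{k+1}$$
for any $r_n$ with $r_n/M_n\to0$. Writing $a_i:=X_{ni}/M_n$, one observes that the event essentially forces every $a_i\in(0,1]$ (contributions from any $a_i\le0$ vanish in the limit), and then conditions successively on $a_1,\ldots,a_{k+1}$. The refined regular-variation hypothesis \eqref{assume} yields, uniformly for $a$ bounded away from $0$,
$$\frac{P(X_{ni}/M_n\in da)}{P(Y>M_n)}=\alpha a^{-\alpha-1}\,da+o(1)\,,$$
so the iterated integral reproduces the recursion defining $c_{k+1}(k)$ in the limit. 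The contribution from the region where some $a_i\le\delta$ is absorbed into a negligible remainder by the vertex analysis above (which shows this contribution is $O(\delta^{k-\alpha})$, vanishing as $\delta\downarrow0$).

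Finally, introduce a threshold $x_n\ll M_n$ chosen via \eqref{t2.eq1} and let $L_n:=\#\{j\le n\,:\,X_{nj}>x_n\}$. Partition $\{S_n>kM_n\}$ by $L_n$: the case $L_n\ge k+2$ has probability at most $\binom{n}{k+2}P(Y>x_n)^{k+2}=o(n^{k+1}P(Y>M_n)^{k+1})$ for a suitable $x_n$; the case $L_n=k+1$ combined with $S_n>kM_n$ reduces, after controlling the $n-k-1$ small summands, to $\binom{n}{k+1}P(X_{n1}+\cdots+X_{n,k+1}>kM_n-r_n)$, giving the main term by the previous paragraph; the cases $L_n\le k$ force either a macroscopic gap $(k-L_n)M_n$ to be covered by the sum of ``small'' summands or, in the borderline $L_n=k$ case, the $k$ large summands to sit arbitrarily close to the ceiling $kM_n$ with only a small positive contribution from the small ones. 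The borderline $L_n=k$ is the main obstacle: one needs the sharp vertex estimate $P(\sum_{i=1}^k X_{ni}>kM_n-r)\asymp(r/M_n)^k P(Y>M_n)^k$ paired with tail estimates on the sum of truncated small summands via Karamata's theorem and a Prokhorov-type inequality as in the proof of Lemma \ref{l1}; the restriction $\alpha<k/(k+2)$ is precisely what balances these estimates against the target $n^{k+1}P(Y>M_n)^{k+1}$.
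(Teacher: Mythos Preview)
Your overall strategy matches the paper's: isolate the contribution of $(k+1)$ summands near $M_n$ as the main term, bound the $\ge k+2$ case crudely, and kill the $\le k$ cases by combining the vertex estimate $P(\sum_{i=1}^k X_{ni}>(k-\vep)M_n)\asymp\vep^k P(Y>M_n)^k$ with a Prokhorov-type bound on the truncated small sum. Your integral representation of $c_{k+1}(k)$ and the vertex analysis for its finiteness are correct and arguably cleaner than the paper's route (the paper shows inductively that $c_k(k-u)=O(u^k)$ and feeds this back through the recursion).

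There is, however, a genuine error in your auxiliary asymptotic: the claim
\[
P\Bigl(\sum_{i=1}^{k+1}X_{ni}>kM_n-r_n\Bigr)\sim c_{k+1}(k)\,P(Y>M_n)^{k+1}
\]
is \emph{false} for arbitrary $r_n=o(M_n)$. The event contains $\bigl\{\sum_{i=1}^k X_{ni}>(k-r_n/M_n)M_n,\;X_{n,k+1}\ge 0\bigr\}$, whose probability, by your own vertex estimate, is of order $(r_n/M_n)^k P(Y>M_n)^k$; this dominates $P(Y>M_n)^{k+1}$ unless $r_n/M_n=o\bigl(P(Y>M_n)^{1/k}\bigr)$. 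The paper states and proves exactly this sharpened version (Lemma~\ref{l5}), and the rate is not cosmetic: it dictates the choice of the small-sum tolerance $\vep_n$ and the truncation level $z_n$ in the paper's events $E_n,F_n,G_n,H_n$, and it is precisely where the hypothesis $\alpha<k/(k+2)$ together with \eqref{t2.eq1} enters, making the competing constraints on $z_n$ (from $P(F_n)$ on one side and the Prokhorov bound for $G_n$ on the other) simultaneously satisfiable. Without this rate, your $L_n=k+1$ step does not reduce to the claimed main term, and your sketch leaves unexplained how $x_n$ is actually chosen. You also do not address the lower bound; the paper obtains it separately by inclusion--exclusion at level $(k+\vep)M_n$ and then lets $\vep\downarrow 0$ using continuity of $c_{k+1}$ at $k$.
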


Notice that the assumption \eqref{t2.eq1} implies that
\begin{equation}\label{eq.observe}
\lim_{n\to\infty}nP(Y>M_n)=0\,,
\end{equation}
which in particular shows that this is stronger than the corresponding assumption \eqref{eq.hard:alphal2}, for the case $1<\alpha<2$ in Theorem \ref{t1}. Before starting the proof of Theorem \ref{t2}, we shall prove a couple of lemmas, which will be used in the proof of the former.

\begin{lemma}\label{l4}
For $k\ge1$, and $x\in(k-1,k]$,
\begin{equation}\label{l4.claim}
P\left(\sum_{j=1}^{k}X_{nj}>xM_n\right)\sim c_{k}(x)P(Y>M_n)^{k}\,,
\end{equation}
as $n\to\infty$, and \eqref{t2.claim} is true.
\end{lemma}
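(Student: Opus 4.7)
The plan is a simultaneous induction on $k$, establishing both the asymptotic equivalence for $x \in (k-1, k]$ and the finiteness $c_{k+1}(k) < \infty$. The base case $k = 1$, $x \in (0, 1]$ is immediate: $P(X_{n1} > xM_n) = P(Y > xM_n) - P(Y > M_n) \sim (x^{-\alpha} - 1)P(Y > M_n) = c_1(x)P(Y > M_n)$ by the regular variation of $P(Y > \cdot)$, and $c_2(1) = \alpha \int_0^1 ((1-z)^{-\alpha} - 1) z^{-\alpha-1} dz < \infty$ because the endpoint singularities are of order $z^{-\alpha}$ near $z = 0$ and $(1-z)^{-\alpha}$ near $z = 1$, both integrable for $\alpha < 1$.

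For the inductive step, assume the lemma at step $k$ and fix $x \in (k, k+1]$. Since $\sum_{j=1}^k X_{nj} \le kM_n$, the event $\{\sum_{j=1}^{k+1} X_{nj} > xM_n\}$ forces $X_{n,k+1} > (x-k)M_n > 0$; conditioning on $X_{n,k+1}$, using $X_{n,k+1} = Y_{k+1}$ on that set, and substituting $z = y/M_n$ gives
\[
\frac{P(\sum_{j=1}^{k+1} X_{nj} > xM_n)}{P(Y > M_n)^{k+1}} = \int_{x-k}^{1} h_n(z)\,\nu_n(dz),
\]
where $h_n(z) := P(\sum_{j=1}^k X_{nj} > (x-z)M_n)/P(Y > M_n)^k$ and $\nu_n(dz) := P(Y/M_n \in dz)/P(Y > M_n)$. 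By regular variation $\nu_n$ converges vaguely to $\alpha z^{-\alpha-1}dz$ on $(0,1]$, with \eqref{assume} providing refined uniform control near $z = 1$, while for $z \in (x-k, 1]$ the argument $x - z$ lies in $(k-1, k]$ and the inductive hypothesis yields $h_n(z) \to c_k(x-z)$ pointwise. The function $h_n$ is monotone increasing in $z$, vanishes identically at $z = x-k$ (since the sum cannot exceed $kM_n$), and is bounded above by $h_n(1) \to c_k(x-1) < \infty$; $\nu_n$ has bounded total mass $((x-k)^{-\alpha} - 1) + o(1)$ on $[x-k,1]$. Passing to the limit by bounded convergence produces $\alpha \int_{x-k}^{1} c_k(x-z) z^{-\alpha-1}dz = c_{k+1}(x)$.

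The main technical difficulty lies at the boundary $z \to (x-k)^+$, where the inductive equivalence becomes degenerate because $x - z$ touches the right endpoint $k$ of the induction domain; the monotonicity of $h_n$ together with the deterministic identity $h_n(x-k) = 0$ and the continuous vanishing $c_k(x-z) \to c_k(k) = 0$ supply the needed control, with \eqref{assume} playing the analogous role near $z = 1$. Finally, the finiteness $c_{k+2}(k+1) < \infty$ is obtained via an auxiliary induction showing $c_k(k - \eta) = O(\eta^k)$ as $\eta \downarrow 0$: the base case $c_1(1-\eta) = (1-\eta)^{-\alpha} - 1 = O(\eta)$ is clear, and a change of variable in the recursion, combined with the inductive bound on $c_k$ near its right endpoint, yields $c_{k+1}(k+1-z) = O(z^{k+1})$. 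Consequently the integrand of $c_{k+2}(k+1) = \alpha \int_0^1 c_{k+1}(k+1-z) z^{-\alpha-1} dz$ is $O(z^{k-\alpha})$ near $z = 0$, integrable because $\alpha < 1 \le k$, while near $z = 1$ it reduces to (a multiple of) $c_{k+1}(k) < \infty$ by the main inductive hypothesis.
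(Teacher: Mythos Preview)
Your proof is correct and follows essentially the paper's route: induct on $k$, condition on the last summand, upgrade the pointwise convergence of $h_n$ to uniform convergence via its monotonicity and the continuity of the limit $c_k(x-\cdot)$, pass through the weakly convergent measures $\nu_n$, and carry along the auxiliary estimate $c_k(k-\eta)=O(\eta^k)$ to obtain the finiteness of $c_{k+1}(k)$. The phrase ``bounded convergence'' is a slight misnomer since the measure itself varies with $n$ (what you are really using is uniform convergence of the integrand combined with weak convergence of $\nu_n$ on $[x-k,1]$), and the appeal to \eqref{assume} is not actually needed in this lemma, but neither point affects the correctness of the argument.
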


\begin{proof}
Since $k>\alpha$ by assumption, the proof of the lemma will be complete if we can show that for all $k\ge1$, \eqref{l4.claim} holds, and
\begin{equation}\label{l4.eq3}
c_{k}(k-u)=O\left(u^k\right)\,,
\end{equation}
as $u\downarrow0$.

The proof is by induction on $k$. For $k=1$, \eqref{l4.claim} and \eqref{l4.eq3} are trivial to check. As the induction hypothesis, we assume them to be true for $k$. We proceed to show them to be true for $k+1$. To that end, fix $x\in(k,k+1]$, and write
\begin{eqnarray}
&&\frac{P\left(\sum_{j=1}^{k+1}X_{nj}>xM_n\right)}{P(Y>M_n)^{k+1}}\nonumber\\
&=&\int_{(x-k,1]}\frac{P\left(\sum_{j=1}^kX_{nj}>(x-s)M_n\right)}{P(Y>M_n)^k}\frac{P(X_{n1}\in M_nds)}{P(Y>M_n)}\,.\label{l4.eq2}
\end{eqnarray}
Using the induction hypothesis \eqref{l4.claim} for $k$, and the fact that the function $s\mapsto\frac{P\left(\sum_{j=1}^kX_{nj}>(x-s)M_n\right)}{P(Y>M_n)^k}$ is non-decreasing, it follows that the convergence is uniform, that is,
$$
\lim_{n\to\infty}\sup_{s\in[x-k,1]}\left|\frac{P\left(\sum_{j=1}^kX_{nj}>(x-s)M_n\right)}{P(Y>M_n)^k}-c_k(x-s)\right|=0\,.
$$
Thus, as $n\to\infty$,
$$
\int_{(x-k,1]}\frac{P\left(\sum_{j=1}^kX_{nj}>(x-s)M_n\right)}{P(Y>M_n)^k}\frac{P(X_{n1}\in M_nds)}{P(Y>M_n)}
$$
\begin{equation}\label{l4.eq1}
=\int_{(x-k,1]}c_k(x-s)\frac{P(X_{n1}\in M_nds)}{P(Y>M_n)}+o(1)\,.
\end{equation}
Clearly,
$$
\lim_{n\to\infty}\int_{(x-k,1]}c_k(x-s)\frac{P(X_{n1}\in M_nds)}{P(Y>M_n)}=\int_{(x-k,1]}c_k(x-s)\alpha s^{-\alpha-1}ds=c_{k+1}(x)\,.
$$
This, in view of \eqref{l4.eq2} and \eqref{l4.eq1}, shows that \eqref{l4.claim} holds for $k+1$. That \eqref{l4.eq3} holds with $k+1$, follows trivially from the hypothesis that the same holds with $k$, and the definition of $c_{k+1}(\cdot)$. This completes the induction step, and thus completes the proof of the lemma.
\end{proof}

\begin{lemma}\label{l5}
If $k$ is a positive integer, and $(\vep_n)$ is a sequence of positive numbers such that
\begin{equation}\label{l5.eq1}
\vep_n=o\left(P(Y>M_n)^{1/k}\right)\,,
\end{equation}
then
\begin{equation}\label{l5.claim}
P\left(\sum_{j=1}^{k+1}X_{nj}>(k-\vep_n)M_n\right)\sim c_{k+1}(k)P(Y>M_n)^{k+1}\,,
\end{equation}
as $n\to\infty$.
\end{lemma}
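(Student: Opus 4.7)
The plan is to adapt the conditioning argument used in Lemma~\ref{l4}, integrating out $X_{n,k+1}$, but with extra care near the boundary where the remaining $k$-variable sum must exceed a threshold slightly below $(k-1)M_n$. Writing
\[
P\Bigl(\sum_{j=1}^{k+1}X_{nj}>(k-\varepsilon_n)M_n\Bigr)=\int_{(-\varepsilon_n,1]}P\Bigl(\sum_{j=1}^k X_{nj}>(k-\varepsilon_n-s)M_n\Bigr)\,P(X_{n,k+1}\in M_n\,ds),
\]
I split the range of $s$ into $A=(-\varepsilon_n,0]$, $B=(0,1-\varepsilon_n]$, and $C=(1-\varepsilon_n,1]$. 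On $A$ the threshold lies in $[k-\varepsilon_n,k)$, so by Lemma~\ref{l4} and the decay \eqref{l4.eq3} the integrand is $O(\varepsilon_n^k P(Y>M_n)^k)$, against a measure bounded by $P(Y\le 0)$; this contributes $o(P(Y>M_n)^{k+1})$ since $\varepsilon_n^k=o(P(Y>M_n))$. On $B$ the threshold lies in $[k-1,k-\varepsilon_n)$, inside the domain of Lemma~\ref{l4}; monotonicity of the integrand in $s$ gives uniform convergence to $c_k(k-s)$ on compact subsets of $(0,1)$, vague convergence of $P(X_{n,k+1}\in M_n\cdot)/P(Y>M_n)$ to $\alpha s^{-\alpha-1}\,ds$ handles the measure, and integrability near $s=0$ comes from $c_k(k-s)=O(s^k)$ and $\alpha<k$ (the same computation giving \eqref{t2.claim}); the $B$-integral therefore converges to $c_{k+1}(k)\,P(Y>M_n)^{k+1}$.

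Region $C$ is the main obstacle and is what forces an induction on $k$. I would prove the strengthened statement that for each $k\ge 1$ there exists $C_k<\infty$ with
\[
P\Bigl(\sum_{j=1}^{k+1}X_{nj}>(k-\eta)M_n\Bigr)\le c_{k+1}(k)\,P(Y>M_n)^{k+1}+C_k\,\eta^k\,P(Y>M_n)^k
\]
for all small $\eta>0$; the base case $k=1$ comes from the same three-region split applied directly to \eqref{assume}. In the inductive step, for $s\in C$ the threshold becomes $(k-1-\eta')M_n$ with $\eta'=\varepsilon_n-(1-s)\in(0,\varepsilon_n]$, and the strengthened hypothesis at level $k-1$ yields
\[
P\Bigl(\sum_{j=1}^k X_{nj}>(k-1-\eta')M_n\Bigr)\le c_k(k-1)P(Y>M_n)^k+C_{k-1}(\eta')^{k-1}P(Y>M_n)^{k-1};
\]
combined with $P(X_{n,k+1}\in M_n(1-\varepsilon_n,1])\sim\alpha\varepsilon_n P(Y>M_n)$ from \eqref{assume}, the $C$-integral is bounded by $O(\varepsilon_n P(Y>M_n)^{k+1})+O(\varepsilon_n^k P(Y>M_n)^k)=o(P(Y>M_n)^{k+1})$. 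The matching lower bound follows from $B$ alone.

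The reason the induction is needed, rather than a direct invocation of Lemma~\ref{l5} at level $k-1$ inside $C$, is that such an invocation would require $\varepsilon_n=o(P(Y>M_n)^{1/(k-1)})$, strictly stronger than our hypothesis $\varepsilon_n=o(P(Y>M_n)^{1/k})$; propagating the explicit error term $C_{k-1}\eta^{k-1}P(Y>M_n)^{k-1}$ through the induction is what allows the weaker hypothesis to close the argument.
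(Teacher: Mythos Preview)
Your proposal is correct and follows essentially the same route as the paper: an induction on $k$ with the strengthened hypothesis
\[
P\Bigl(\sum_{j=1}^{k+1}X_{nj}>(k-u)M_n\Bigr)=O\bigl(u^kP(Y>M_n)^k+P(Y>M_n)^{k+1}\bigr),
\]
which is exactly the paper's \eqref{l5.eq2}, and the boundary region $C=(1-\varepsilon_n,1]$ is handled identically to the paper's $J_5$. The only cosmetic difference is that the paper splits the integration range into five pieces $(-u,0]$, $(0,\delta]$, $(\delta,1-\delta]$, $(1-\delta,1-u]$, $(1-u,1]$ with an auxiliary $\delta\downarrow0$, whereas you use three; your region $B$ absorbs the paper's middle three pieces, and your remark about integrability near $s=0$ via $c_k(k-s)=O(s^k)$ plays the role of the paper's separate $J_2$ estimate.
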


\begin{proof}
The proof is again by induction on $k$. Our induction hypothesis is a bit stronger than the statement of the result, namely the following. For all $k\ge1$,
\begin{eqnarray}\label{l5.eq2}
&&P\left(\sum_{j=1}^{k+1}X_{nj}>(k-u)M_n\right)\\
&=&O\left(u^kP(Y>M_n)^k+P(Y>M_n)^{k+1}\right)\,,\nonumber
\end{eqnarray}
as $u\downarrow0$ and $n\to\infty$. In addition, if $(\vep_n)$ is a sequence of positive numbers satisfying \eqref{l5.eq1}, then \eqref{l5.claim} holds.
We first verify this hypothesis for $k=1$. To that end, fix $0<u<\delta<1/2$, and notice that
\begin{eqnarray*}
&&P\left(X_{n1}+X_{n2}>(1-u)M_n\right)\\
&=&\int_{(-u,1]}P\left(X_{n1}>(1-u-z)M_n\right)P\left(X_{n1}\in M_ndz\right)\\
&=&\int_{(-u,0]}+\int_{(0,\delta]}+\int_{(\delta,1-\delta]}+\int_{(1-\delta,1-u]}+\int_{(1-u,1]}\\
&=:&I_1+I_2+I_3+I_4+I_5\,.
\end{eqnarray*}
Clearly,
\begin{eqnarray*}
I_1&\le&P\left(X_{n1}>(1-u)M_n\right)\\
&=&O\left(uP(Y>M_n)\right)\,,
\end{eqnarray*}
as $u\downarrow0$ and $n\to\infty$, the last step following from Lemma \ref{l2}. Using that result once again, it follows that there is $C<\infty$ independent of $n$, $u$ and $\delta$, satisfying
\begin{eqnarray*}
I_2&\le&CP(Y>M_n)\int_{(0,\delta]}(u+z)P\left(X_{n1}\in M_ndz\right)\\
&\le&CP(Y>M_n)\left[u+M_n^{-1}\int_{(0,\delta M_n]}zP(Y\in dz)\right]\,.
\end{eqnarray*}
By the Karamata's theorem, it follows that as $n\to\infty$,
$$
M_n^{-1}\int_{(0,\delta M_n]}zP(Y\in dz)\sim\frac\alpha{1-\alpha}\delta^{1-\alpha}P(Y>M_n)\,.
$$
This shows that
$$
I_2=O\left[uP(Y>M_n)+P(Y>M_n)^2\right]\,.
$$
A restatement of Lemma \ref{l4} is that
$$
P(Y>M_n)^{-1}P\left(X_{n1}>\cdot\right)\to c_1(\cdot)\,,
$$
uniformly on $[\delta/2,1]$,
from which it follows that for $u\le\delta/2$,
$$
\lim_{n\to\infty}\sup_{\delta\le z\le1-\delta}\left|\frac{P\left(X_{n1}>(1-u-z)M_n\right)}{P(Y>M_n)}-c_1(1-u-z)\right|=0\,.
$$
Thus,
\begin{eqnarray*}
I_3&\sim&P(Y>M_n)\int_{(\delta,1-\delta]}c_1(1-u-z)P(X_{n1}\in M_ndz)\\
&\sim&P(Y>M_n)^{2}\alpha\int_\delta^{1-\delta}c_1(1-u-z)z^{-\alpha-1}dz\,,
\end{eqnarray*}
as $n\to\infty$. In other words,
$$
I_3=O\left(P(Y>M_n)^2\right)\,.
$$
By arguments similar to those leading to \eqref{l2.eq4}, it follows that for $\delta$ small enough and $n$ large enough,
\begin{eqnarray*}
I_4&\le&2\alpha {P(Y>M_n)}\int_{1-\delta}^{1-u}P\left(X_{n1}>(1-u-z)M_n\right)dz\\
&=&2\alpha {P(Y>M_n)}M_n^{-1}\int_0^{(\delta-u)M_n}P(Y>v)dv\\
&\le&2\alpha {P(Y>M_n)}M_n^{-1}\int_0^{\delta M_n}P(Y>v)dv\\
&\sim&2\frac\alpha{1-\alpha}\delta^{1-\alpha}P(Y>M_n)^2\,,
\end{eqnarray*}
as $n\to\infty$, the last step following from Karamata's theorem. Consequently,
\begin{equation*}
I_4=O\left(P(Y>M_n)^2\right)\,.
\end{equation*}
Finally, the same arguments show that,
\begin{eqnarray*}
I_5&=&O\left({P(Y>M_n)}\int_{1-u}^1P\left(X_{n1}>(1-u-z)M_n\right)dz\right)\nonumber\\
&=&O\left(uP(Y>M_n)\right)\,.
\end{eqnarray*}
The above calculations show that \eqref{l5.eq2} holds for $k=1$. Now suppose that $(\vep_n)$ is a sequence of positive numbers satisfying
$$
\vep_n=o\left(P(Y>M_n)\right)\,.
$$
Define $I_1$ to $I_5$ as above, with $u$ replaced by $\vep_n$. The same calculations as above, will reveal that
$$
I_3\sim\alpha\int_\delta^{1-\delta}c_1(1-z)z^{-\alpha-1}dz
$$
as $n\to\infty$, and
$$
\lim_{\delta\downarrow0}\limsup_{n\to\infty}\left[I_1+I_2+I_4+I_5\right]=0\,.
$$
Thus, \eqref{l5.claim} holds, again with $k=1$.

Next, we proceed to prove the induction step, that is, assuming that the hypothesis is true for $k-1$, we shall show that to be true for $k$.
Let $0<u<\delta<1/2$, and write
\begin{eqnarray*}
&&P\left(\sum_{j=1}^{k+1}X_{nj}>(k-u)M_n\right)\\
&=&\int_{(-u,1]}P\left(\sum_{j=1}^kX_{nj}>(k-u-z)M_n\right)P(X_{n1}\in M_ndz)\\
&=&\int_{(-u,0]}+\int_{(0,\delta]}+\int_{(\delta,1-\delta]}+\int_{(1-\delta,1-u]}+\int_{(1-u,1]}\\
&=:&J_1+J_2+J_3+J_4+J_5\,.\end{eqnarray*}
By similar arguments as above, it follows that
$$
J_1+J_2=O\left(u^kP(Y>M_n)^k+P(Y>M_n)^{k+1}\right)\,,
$$
as $u\downarrow0$ as $n\to\infty$, and
$$
J_3\sim P(Y>M_n)^{k+1}\int_{\delta}^{1-\delta}c_k(k-u-z)\alpha z^{-\alpha-1}dz\,,
$$
as $n\to\infty$.
Once again, by arguments similar to those leading to \eqref{l2.eq4}, it follows that for $\delta$ small enough and $n$ large enough,
\begin{eqnarray*}
J_4&\le&2\alpha P(Y>M_n)\int_{1-\delta}^{1-u}P\left(\sum_{j=1}^kX_{nj}>(k-u-z)M_n\right)dz\\
&\le&2\alpha\delta P(Y>M_n)P\left(\sum_{j=1}^{k}X_{nj}>(k-1)M_n\right)\,.
\end{eqnarray*}
Using the induction hypothesis \eqref{l5.eq2} for $k-1$, it follows that
$$
P\left(\sum_{j=1}^{k}X_{nj}>(k-1)M_n\right)=O\left(P(Y>M_n)^{k}\right)\,,
$$
which in turn, shows that there exists $C<\infty$ (independent of $\delta$, $u$ and $n$) satisfying
$$
J_4\le C\delta P(Y>M_n)^{k+1}\,,
$$
for $n$ large and $\delta$ small enough.
Finally, the same arguments show that,
\begin{eqnarray*}
J_5&=&O\left({P(Y>M_n)}\int_{1-u}^1P\left(\sum_{j=1}^kX_{nj}>(k-u-z)M_n\right)dz\right)\,.
\end{eqnarray*}
Using the induction hypothesis, we claim that there is $C<\infty$ such that
$$
P\left(\sum_{j=1}^kX_{nj}>(k-u-z)M_n\right)\le 
$$
$$
C\left[(u+z-1)^{k-1}P(Y>M_n)^{k-1}+P(Y>M_n)^k\right]\,,
$$
whenever $1-u\le z\le1$, and $u$ small enough. Thus, for such an $u$,
\begin{eqnarray*}
&&\int_{1-u}^1P\left(\sum_{j=1}^kX_{nj}>(k-u-z)M_n\right)dz\\
&\le&C\left[P(Y>M_n)^{k-1}\int_{1-u}^1(u+z-1)^{k-1}dz+P(Y>M_n)^k\right]\\
&=&C\left[P(Y>M_n)^{k-1}\frac{u^{k}}{k}+P(Y>M_n)^k\right]\,.
\end{eqnarray*}
This shows that
$$
J_5=O\left(u^{k}P(Y>M_n))^k+P(Y>M_n))^{k+1}\right)\,,
$$
and thus concludes the proof of \eqref{l5.eq2} for $k$. Once again, a close inspection of the calculations above will reveal that if $(\vep_n)$ is a sequence of positive numbers satisfying \eqref{l5.eq1}, then \eqref{l5.claim} holds. This proves the induction step, and thus completes the proof of the lemma
\end{proof}

\begin{proof}[Proof of Theorem \ref{t2}]
In view of Lemma \ref{l4}, \eqref{t2.claim2} is what remains to show. To that end, we start with the proof of the upper bound, that is the lim sup of the left hand side divided by the right one is at most 1. Define
$$
u_n:=P(Y>M_n)^{1/k}M_n,\,n\ge1\,.
$$
By \eqref{t2.eq1}, it follows that
$$
P(Y>M_n)^{-\frac{k+1}{k+2}}P(Y>u_n)\ll n^{-\frac1{k+2}}\,,
$$
which we restate as
$$
P(Y>u_n)\ll n^{-\frac1{k+2}}P(Y>M_n)^{\frac{k+1}{k+2}}\,.
$$
Clearly, the right hand side goes to zero as $n\to\infty$, and hence there exists a sequence $(z_n)$ satisfying
\begin{equation}\label{t2.eq2}
P(Y>u_n)\ll P(Y>z_n)\ll n^{-\frac1{k+2}}P(Y>M_n)^{\frac{k+1}{k+2}}\,.
\end{equation}
Fix such a $(z_n)$. An immediate consequence of the left inequality above is that
$$
z_n\ll P(Y>M_n)^{1/k}M_n\,.
$$
Fix a sequence $(\vep_n)$ satisfying
\begin{equation}\label{t2.eq3}
\frac{z_n}{M_n}\ll \vep_n\ll P(Y>M_n)^{1/k}\,.
\end{equation}

Define the events
\begin{eqnarray*}
E_n&:=&\left\{\sum_{i=1}^{k+1}X_{nj_i}>(k-\vep_n)M_n\mbox{ for some }1\le j_1\le\ldots\le j_{k+1}\le n\right\}\,,\\
F_n&:=&\left\{|X_{nj}|>z_n\mbox{ for at least }(k+2)\mbox{ many }j\mbox{'s }\le n\right\}\,,\\
G_n&:=&\left\{\sum_{j=1}^nX_{nj}\one\left(|X_{nj}|\le z_n\right)>\vep_nM_n\right\}\,,\\
H_n&:=&\left\{\sum_{i=1}^{k}X_{nj_i}>(k-\vep_n)M_n\mbox{ for some }1\le j_1\le\ldots\le j_k\le n\right\}\,.
\end{eqnarray*}
Clearly, for $n$ large enough so that $\vep_n\le1$, it holds that
$$
\{S_n>kM_n\}\subset E_n\cup F_n\cup G_n\cup H_n\,.
$$
Note that
\begin{eqnarray*}
P(E_n)&\le&\frac{n^{k+1}}{(k+1)!}P\left(\sum_{j=1}^{k+1}X_{nj}>(k-\vep_n)M_n\right)\\
&\sim&\frac{n^{k+1}}{(k+1)!}c_{k+1}(k)P(Y>M_n)^{k+1}\,,
\end{eqnarray*}
as $n\to\infty$, the equivalence following from Lemma \ref{l5} and the right inequality in \eqref{t2.eq3}. In order to complete the proof of the upper bound, we shall next show that
\begin{equation}\label{t2.eq4}
P(F_n)+P(G_n)+P(H_n)=o\left(n^{k+1}P(Y>M_n)^{k+1}\right)\,.
\end{equation}
To that end, notice that the right inequality in \eqref{t2.eq2} implies that
$$
P(F_n)=o\left(n^{k+1}P(Y>M_n)^{k+1}\right)\,.
$$
By Lemma \ref{l2}, it follows that
\begin{eqnarray*}
P(H_n)&=&O\left(n^k \vep_n^kP(Y>M_n)^k\right)\\
&=&o\left(n^k P(Y>M_n)^{k+1}\right)\\
&=&o\left(n^{k+1}P(Y>M_n)^{k+1}\right)\,,
\end{eqnarray*}
the equality in the second line following from the right inequality in \eqref{t2.eq3}. For estimating $P(G_n)$, we shall appeal once more to the result in \cite{prokhorov:1959}; see \eqref{l1.prokhorov}. First notice that
\begin{eqnarray*}
E\left|\sum_{j=1}^nX_{nj}\one\left(|X_{nj}|\le z_n\right)\right|&\le&n\int_{[0,{z_n}]}xP(|Y|\in dx)\\
&=&O\left(nz_nP(Y>z_n)\right)\\
&=&o\left[z_n\left\{nP(Y>M_n)\right\}^{\frac{k+1}{k+2}}\right]\\
&=&o(z_n)\\
&=&o(\vep_nM_n)\,,
\end{eqnarray*}
the last three steps following from the right inequality in \eqref{t2.eq2}, the limit in \eqref{eq.observe} and the left inequality in \eqref{t2.eq3} respectively. Thus, for $n$ large enough, \eqref{l1.prokhorov} implies that
$$
P(G_n)\le\exp\left\{-K_1\frac{\vep_nM_n}{z_n}\sinh^{-1}K_2\frac{\vep_nM_nz_n}{n\Var[X_{n1}\one(|X_{n1}|\le z_n)]}\right\}\,,
$$
for some finite and positive constants $K_1$ and $K_2$. Note that
\begin{eqnarray*}
\frac{\vep_nM_nz_n}{n\Var[X_{n1}\one(|X_{n1}|\le z_n)]}&\ge&\frac{\vep_nM_nz_n}{nE[X_{n1}^2\one(|X_{n1}|\le z_n)]}\\
&\sim&K\frac{\vep_nM_n}{nz_nP(Y>z_n)}\\
&\gg&\left[nP(Y>z_n)\right]^{-1}\\
&\gg&\left[nP(Y>M_n)\right]^{-(k+1)/(k+2)}\,,
\end{eqnarray*}
where $K$ is the constant from Karamata's theorem. In view of this, the fact that $\sinh^{-1}x\ge\log x$ for $x>0$, and \eqref{t2.eq3}, it follows that
$$
-K_1\frac{\vep_nM_n}{z_n}\sinh^{-1}K_2\frac{\vep_nM_nz_n}{n\Var[X_{n1}\one(|X_{n1}|\le z_n)]}\gg-\log\left[nP(Y>M_n)\right]\,,
$$
which shows that
$$
P(G_n)=o\left(n^{k+1}P(Y>M_n)^{k+1}\right)\,.
$$
This completes the proof of \eqref{t2.eq4}, and thus shows that
\begin{equation}\label{t2.eq5}
\limsup_{n\to\infty}\frac{P(S_n>kM_n)}{n^{k+1}P(Y>M_n)^{k+1}}\le\frac{c_{k+1}(k)}{(k+1)!}\,.
\end{equation}

For the reverse inequality with lim inf, notice that for all $\vep>0$,
$$
\{S_n>kM_n\}\supset\bigcup_{j\in C_{nk}}\left\{\sum_{i=1}^{k+1}X_{nj_i}>(k+\vep)M_n,\left|\sum_{i\in j^c}X_{ni}\right|\le\vep M_n\right\}\,,
$$
where $C_{nk}$ and $j^c$ are as defined just before Lemma \ref{l1}. By arguments similar to those proving \eqref{t3.eq15}, it follows that
\begin{eqnarray*}
&&P\left(\bigcup_{j\in C_{nk}}\left\{\sum_{i=1}^{k+1}X_{nj_i}>(k+\vep)M_n,\left|\sum_{i\in j^c}X_{ni}\right|\le\vep M_n\right\}\right)\\
&\sim&\sum_{j\in C_{nk}}P\left(\sum_{i=1}^{k+1}X_{nj_i}>(k+\vep)M_n,\left|\sum_{i\in j^c}X_{ni}\right|\le\vep M_n\right)\\
&=&\left(\begin{array}cn\\k+1\end{array}\right)P\left(\sum_{i=1}^{k+1}X_{ni}>(k+\vep)M_n\right)P\left(\left|\sum_{i=k+2}^nX_{ni}\right|\le\vep M_n\right)\,.
\end{eqnarray*}
By Lemma \ref{l4} and the fact that $M_n^{-1}\sum_{i=k+2}^nX_{ni}$ goes to zero in probability, it follows that the right hand side is asymptotically equivalent to
$$
\frac{n^{k+1}}{(k+1)!}P(Y>M_n)^{k+1}c_{k+1}(k+\vep)\,.
$$
The above calculations put together show that
$$
\liminf_{n\to\infty}\frac{P(S_n>kM_n)}{n^{k+1}P(Y>M_n)^{k+1}}\ge\frac{c_{k+1}(k+\vep)}{(k+1)!}\,.
$$
By letting $\vep\downarrow0$, the reverse inequality of \eqref{t2.eq5} with lim inf, follows. This completes the proof.
\end{proof}

\section{Examples}\label{sec:examples}
We end the paper with a couple of examples. The first example is of a random variable with a regularly varying tail, for which \eqref{assume} does not hold. Let $\alpha>0$ and suppose $Y$ is a random variable whose tail probability is given by
$$
P(Y>x)=\frac12\left(1+\frac1{\lfloor x\rfloor}\right)x^{-\alpha},\,x\ge1\,.
$$
Set
$$
a_n:=1-n^{-2},\,n\ge1\,,
$$
and observe that
\begin{eqnarray*}
\lim_{n\to\infty}\frac1{1-a_n}\left[\frac{P(Y>na_n)}{P(Y>n)}-a_n^{-\alpha}\right]=1\,.
\end{eqnarray*}
Clearly, \eqref{assume} cannot hold for this $Y$, although the tail of $Y$ is regularly varying with index $-\alpha$.

Suppose that $P(Y>\cdot)$ is regularly varying with index $-\alpha$. By Karamata's representation, there exist functions $c(\cdot)$ and $\vep(\cdot)$ such that
\begin{eqnarray}
\nonumber P(Y>x)&=&c(x)x^{-\alpha}\exp\left(\int_1^x\frac{\vep(t)}tdt\right),\,x\ge1\,,\\
\label{neweq3}\lim_{x\to\infty}c(x)&=&c\in(0,\infty)\,,\\
\nonumber \text{and }\lim_{x\to\infty}\vep(x)&=&0\,.
\end{eqnarray}
In the following theorem, we show that if \eqref{neweq3} is strengthened to $c(x)=c$ for $x$ large enough, then \eqref{assume} holds.

\begin{theorem}
Assume that $Y$ is a random variable such that there exists $\alpha,c,T>0$ and $\vep:[0,\infty)\to\bbr$ satisfying 
\[
\lim_{x\to\infty}\vep(x)=0\,,
\]
and
\begin{equation}\label{eq.normrv}
P(Y>x)=cx^{-\alpha}\exp\left(\int_1^x\frac{\vep(t)}tdt\right)\,,\mbox{ for all }x>T\,.
\end{equation}
Then, $Y$ satisfies \eqref{assume}.
\end{theorem}

\begin{proof}
Fix $\delta\in(0,1)$, and let $T_0\ge T$ be such that for all $x\ge T_0$, it holds that $|\vep(x)|\le\delta$. Suppose that $\frac12\le a\le1$ and $x\ge2T_0$. Then,
\begin{eqnarray}
\nonumber\left|\frac{P(Y>ax)}{P(Y>x)}-a^{-\alpha}\right|
\nonumber&=&a^{-\alpha}\left|\exp\left(-\int_{ax}^x\frac{\vep(t)}tdt\right)-1\right|\\
\nonumber&\le&a^{-\alpha}\int_{ax}^x\frac{|\vep(t)|}tdt\,\exp\left(\int_{ax}^x\frac{|\vep(t)|}tdt\right)\\
\nonumber&\le&\delta a^{-\alpha-1}\log(1/a)\\
\label{ineq1}&\le&2^{\alpha+1}\delta(1-a)\,.
\end{eqnarray}
Now suppose that $\frac12\le a\le b\le1$ and $x\ge2T_0$, and note that 
\begin{eqnarray}
\nonumber&&\left|\frac{P(Y>ax)-P(Y>bx)}{P(Y>x)}-(a^{-\alpha}-b^{-\alpha})\right|\\
\label{eq.Q1}&\le&\frac{P(Y>bx)}{P(Y>x)}\left|\frac{P(Y>ax)-P(Y>bx)}{P(Y>bx)}-\left(\frac ab\right)^{-\alpha}+1\right|\\
\label{eq.Q2}&&+\left|\frac{P(Y>bx)}{P(Y>x)}\left[\left(\frac ab\right)^{-\alpha}-1\right]-(a^{-\alpha}-b^{-\alpha})\right|\,.
\end{eqnarray}
The quantity in \eqref{eq.Q1} is clearly at most
\[
\frac{P(Y>x/2)}{P(Y>x)}\left|\frac{P(Y>ax)}{P(Y>bx)}-\left(\frac ab\right)^{-\alpha}\right|\,.
\]
Using \eqref{ineq1}, first with $a=1/2$, and then with $x$ and $a$ replaced by $bx$ and $a/b$, respectively, it follows that
the above quantity is at most $c(\alpha)\delta(b-a)$, where $c(\alpha)$ is a finite constant depending only on $\alpha$. The quantity in \eqref{eq.Q2} equals
\begin{eqnarray*}
\left[\left(\frac ab\right)^{-\alpha}-1\right]\left|\frac{P(Y>bx)}{P(Y>x)}-b^{-\alpha}\right|&\le&2^{\alpha+1}\delta(1-b)\left[\left(\frac ab\right)^{-\alpha}-1\right]\\
&\le&2^{\alpha+1}(1-b)\alpha\left(\frac ab\right)^{-\alpha-1}\left(1-\frac ab\right)\,,
\end{eqnarray*}
the first inequality following, once again,  from \eqref{ineq1}. Thus, there exists a finite constant $C(\alpha)$, depending only on $\alpha$, such that 
\[
\left[\left(\frac ab\right)^{-\alpha}-1\right]\left|\frac{P(Y>bx)}{P(Y>x)}-b^{-\alpha}\right|\le C(\alpha)\delta(b-a)\,.
\]
This completes the proof.
\end{proof}

It is known that \eqref{eq.normrv} is equivalent to assuming that $h(\cdot)$, defined by
\[
h(x):=x^\alpha P(Y>x),\,x\ge0\,,
\]
belongs to the Zygmund class, that is, for all $\vep>0$, $x^{-\vep}h(x)$ is eventually non-increasing, and $x^{\vep}h(x)$ is eventually non-decreasing. Note that the Zygmund class is a subclass of the slowly varying functions. 
See Chapter 1.5.3 of \cite{bingham:goldie:teugels:1987} for a proof of these facts and other details. For example, if $Y$ is a random variable with c.d.f.
$$
F(x):=\left\{\begin{array}{ll}\max\left\{1/2,1-x^{-\alpha}(\log x)^{-2}\right\},&x\ge0\,,\\\min\left\{1/2,|x|^{-\alpha}(\log |x|)^{-2}\right\},&x<0\,,\end{array}\right.
$$
then $Y$ satisfies the hypotheses of Theorems \ref{t1} and \ref{t2},  when $\alpha\ge k$ and $\alpha<k/(k+2)$ respectively.

\section*{Acknowledgement} The author takes this opportunity to gratefully acknowledge the contribution of Professor B.V.\ Rao in his professional life. Professor Rao introduced him to the wonderful world of probability through the several courses in the B.\ Stat and M.\ Stat program of  the Indian Statistical Institute.

The author's research is partially supported by the INSPIRE grant of the Department of Science and Technology, Government of India. He also thanks an anonymous referee for comments and suggestions that helped to improve the exposition significantly.

\end{document}